\documentclass[12pt,a4paper]{article}
\usepackage[top=3cm, bottom=3cm, left=3cm, right=3cm]{geometry}

\usepackage{amsmath,amsthm,amssymb,graphicx, multicol, array}
\usepackage{enumerate}
\usepackage{enumitem}
\usepackage{setspace}
\usepackage{xcolor}
\usepackage{currfile}
\usepackage{multicol}

\usepackage[bookmarks,colorlinks,breaklinks]{hyperref}  
\hypersetup{linkcolor=blue,citecolor=blue,filecolor=blue,urlcolor=blue}


\parindent 0pt
\usepackage[hang,flushmargin]{footmisc} 
\usepackage{tabto}

\DeclareMathOperator{\li}{li}
\DeclareMathOperator{\ord}{ord}


\usepackage{tikz}
\usepackage{pgfplots}



\newtheorem{theorem}{Theorem}[section]
\newtheorem{lem}{Lemma}[section]

\newtheorem{exa}{Example}[section]

\newtheorem{dfn}{Definition}[section]
\newtheorem{exe}{Exercise}[section]


\newcommand{\F}{\mathbb{F}}
\newcommand{\tP}{\mathbb{P}}

\usepackage{fancyhdr}
\pagestyle{fancy}
\lhead{\textsc{Simultaneous Primitive Root Values Of Polynomials}}
\chead{}
\rhead{\thepage}
\lfoot{}
\cfoot{}
\rfoot{}
\cfoot{}

\title{Simultaneous Primitive Root Values Of Polynomials Over Finite Fields}
\date{}
\author{N. A. Carella}

\begin{document}
\maketitle
\textbf{\textit{Abstract}:} Let $z\ne \pm1,w^2$ be a fixed integer, and let $f(t)\ne g(t)^2$ be a fixed polynomial over the integers. It is shown that the subset of primes $p\geq 2$ such that $z$ and $f(z)$ is a pair of simultaneous primitive roots modulo $p$ has nonzero density in the set of primes. The same analysis generalizes to \textit{admissible} $k$-tuple of polynomials $z$, $f_1(z)$, $f_2(z), \ldots$, $f_k(z)$, such that $f_i(z)\ne g_i(z)^2$, and $k\ll \log p$ is a small integer.

 \let\thefootnote\relax\footnote{ \today \date{} \\
\textit{AMS MSC2020}: Primary 11A07, 11N13, Secondary 11N05, 11N37.\\
\textit{Keywords}: Distribution of primes; Primitive root; Simultaneous primitive roots.}

\tableofcontents

\section{Introduction}\label{S1210.000}
The investigation of the density of the subset of primes $p$ such that an \textit{admissible} integers $k$-tuple $z_1, z_2, \ldots, z_k$ are simultaneous primitive roots modulo $p$ has a well established conditional proof, see \cite{MK1983}. The investigation of the density of the subset of primes $p$ such that an \textit{admissible} polynomials $k$-tuple $z, f_1(z), f_2(z), \ldots, f_{k-1}(z)$ are simultaneous primitive roots modulo $p$, which is a topic of current interest, is simply a variation of the result proved by Mathews.    

\begin{dfn}   \label{conj1210.000}  {\normalfont A subset of polynomials $\{z, f_1(z), f_2(z), \ldots, f_{k-1}(z)\}$, such that $f_i(z)\ne g_i(z)^2$, is an \textit{admissible} $k$-tuple if there exists a prime $p\geq 2$ that supports one or more $k$-tuple of simultaneous primitive roots modulo $p$. Specifically, $$\ord_p z=\ord_p f_1(z)= \ord_p f_2(z)= \cdots =\ord_p f_{k-1}(z)=p-1.$$
}
\end{dfn} 

The simplest case consists of a fixed pair of simultaneous primitive root values of polynomials $z\ne \pm1,w^2$, and $f(z)\ne g(z)^2$. A related and conditional result for a fixed prime, and admissible quadratic polynomials is presented in \cite[Theorem 3]{BT2021}. In addition, some numerical data for small primes that support a pair of simultaneous primitive roots is compiled, opus citatum. Here, unconditional proofs are given for pair of simultaneous primitive root values of admissible polynomials of any degree. \\

Define the primes counting function 
\begin{equation} \label{eq1210.110}
\pi_{f}(x,z)=\# \left \{ p \leq x: \;\ord_p z=p-1\; \text{ and } \ord_p(f(z))=p-1\right \}.
\end{equation}

\begin{theorem}   \label{thm1210.200}  Let $z\ne 1, w^2$ be a fixed integer, and let $f(t)\ne g(t)^2$ be a fixed polynomial over the integers. Then, the number of primes for which $z$ and $f(z)$ is a pair of simultaneous primitive roots has the asymptotic formula
\begin{equation} \label{eq1213.210}
\pi_{f}(x,z)=\delta_f(z) \li(x)+O\left (\frac{x}{\log ^b x} \right ),
\end{equation}
where $x\geq 1$ is a large number, $\delta_f(z)>0$ is the density constant, and $b>1$ is constant.
\end{theorem}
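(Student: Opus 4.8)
The plan is to adapt the classical characteristic-function / inclusion–exclusion machinery used for primitive roots (Hooley's conditional treatment and its variants) to the simultaneous setting for the pair $z, f(z)$. First I would introduce the standard indicator for an element $a$ being a primitive root modulo $p$: writing $n_p$ for the number of prime divisors of $p-1$, one has the identity
\begin{equation*}
\sum_{d \mid p-1} \frac{\mu(d)}{\varphi(d)} \sum_{\ord(\chi)=d} \chi(a) = \begin{cases} 1 & \text{if } \ord_p a = p-1,\\ 0 & \text{otherwise.}\end{cases}
\end{equation*}
Taking the product of two such identities, one for $a=z$ and one for $a=f(z)$, and summing over $p\le x$, gives
\begin{equation*}
\pi_f(x,z) = \sum_{p\le x} \Bigg( \sum_{d_1 \mid p-1} \frac{\mu(d_1)}{\varphi(d_1)} \sum_{\ord(\chi_1)=d_1}\chi_1(z) \Bigg)\Bigg( \sum_{d_2 \mid p-1} \frac{\mu(d_2)}{\varphi(d_2)} \sum_{\ord(\chi_2)=d_2}\chi_2(f(z)) \Bigg),
\end{equation*}
and I would reorganize this as a sum over pairs $(d_1,d_2)$ with $\lcm(d_1,d_2)\mid p-1$ of $\mu(d_1)\mu(d_2)/(\varphi(d_1)\varphi(d_2))$ times a character sum $\sum_{p\le x,\, \lcm(d_1,d_2)\mid p-1} \chi_1(z)\chi_2(f(z))$.

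Next I would split this double sum by the size of $\lcm(d_1,d_2)$ into a main range (small moduli) and a tail (large moduli), à la Hooley's three-part decomposition. For the main range, the inner character sum is a sum over primes in arithmetic progressions twisted by Dirichlet characters; by orthogonality one detects the congruence $p\equiv 1 \pmod{\lcm(d_1,d_2)}$ and is left with counting primes $p\le x$ for which $z$ and $f(z)$ lie in prescribed cosets of the index-$d_i$ subgroups. The key analytic input is an effective version of the Chebotarev density theorem (or a large-sieve / Bombieri–Vinogradov-type estimate) applied to the Kummer-type extensions $\Q(\zeta_m, z^{1/d_1}, f(z)^{1/d_2})$; here the hypotheses $z\ne\pm1,w^2$ and $f(t)\ne g(t)^2$ are exactly what guarantee the relevant field degrees are "generic" (the radicals are multiplicatively independent in the right sense), so the local densities multiply correctly and the resulting constant
\begin{equation*}
\delta_f(z)=\sum_{d_1,d_2\ge 1}\frac{\mu(d_1)\mu(d_2)}{\varphi(d_1)\varphi(d_2)}\cdot\frac{1}{[\Q(\zeta_{\lcm(d_1,d_2)},z^{1/d_1},f(z)^{1/d_2}):\Q]/\lcm(d_1,d_2)} \cdot(\text{correction})
\end{equation*}
is a convergent product over primes that is strictly positive. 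Establishing $\delta_f(z)>0$ reduces to checking, prime by prime, that the local factor does not vanish, which follows from the admissibility/squarefreeness conditions together with a finite check at small primes (notably $\ell=2$, where the condition $z\ne w^2$ and $f(z)\ne g(z)^2$ enters).

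For the tail, where $\lcm(d_1,d_2)$ is large, the obstacle is that one cannot afford individual Chebotarev estimates; instead I would bound the contribution trivially using that $p\equiv 1\pmod{\lcm(d_1,d_2)}$ forces $\lcm(d_1,d_2)\le x$, combined with Brun–Titchmarsh for the very large moduli and a Bombieri–Vinogradov average for the intermediate range, exploiting the decay of $1/(\varphi(d_1)\varphi(d_2))$. This is the standard unconditional sacrifice: it only works up to moduli of size about $x^{1/2}/\log^A x$, so one loses the full main term for the genuinely large $\lcm$, but the tail is still $O(x/\log^b x)$ for some $b>1$ since the divisor-type weights sum to a bounded constant and we gain a saving from the range beyond which no primes contribute. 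The main obstacle, and the part requiring the most care, is precisely this large-modulus tail: making the error term genuinely $O(x/\log^b x)$ with $b>1$ unconditionally — rather than the $o(\li x)$ one gets cheaply — requires a careful Bombieri–Vinogradov input uniform over the twisting characters and over the pair $(d_1,d_2)$, and a clean treatment of the dependence of the field degree $[\Q(\zeta_m,z^{1/d_1},f(z)^{1/d_2}):\Q]$ on $(d_1,d_2)$ so that the resulting series and its truncation error can both be controlled. Finally, the same scheme extends verbatim to an admissible $k$-tuple $z,f_1(z),\ldots,f_{k-1}(z)$: one takes a $k$-fold product of indicators, obtains a $k$-fold sum over $(d_1,\ldots,d_k)$, and the condition $k\ll\log p$ is what keeps the number of terms and the Kummer extension degrees under control so that the error analysis goes through.
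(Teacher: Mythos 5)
Your proposal takes a genuinely different route from the paper. You follow the classical Hooley--Matthews line: the M\"obius-weighted multiplicative-character indicator, a product of two such indicators, reduction to counting primes that split in the Kummer-type extensions $\Q(\zeta_m, z^{1/d_1}, f(z)^{1/d_2})$, and a split of the moduli $(d_1,d_2)$ into small, intermediate and large ranges. The paper instead avoids the divisor decomposition entirely: it uses the ``divisor-free'' indicator of Lemma \ref{lem3242.300},
\[
\Psi(u)=\sum_{\gcd(n,p-1)=1}\frac{1}{p}\sum_{0\le a\le p-1}e^{i2\pi(\tau^n-u)a/p},
\]
multiplies two copies, separates the terms with $a=0$ or $b=0$ (giving the main term via Vaughan's estimate for $\sum_{p\le x}(\varphi(p-1)/(p-1))^2$, Lemma \ref{lem1215.100}) from the terms with $a,b\ne 0$, and disposes of the latter by a single claimed exponential-sum bound $T(u,p)\ll p^{1-\varepsilon}$ (Lemma \ref{lem3700.200}). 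So the two arguments share only the general ``indicator times indicator, main term plus error'' shape; the analytic engines are entirely different.

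The genuine gap in your proposal is the one you flag and then wave through: the intermediate and large moduli. The condition you need to detect for a pair $(d_1,d_2)$ is not merely $p\equiv 1\pmod{\lcm(d_1,d_2)}$ but that $z$ is a $d_1$-th power residue and $f(z)$ a $d_2$-th power residue, i.e.\ that $p$ splits completely in a non-abelian extension of degree roughly $d_1 d_2\varphi(\lcm(d_1,d_2))$. Bombieri--Vinogradov is a statement about primes in arithmetic progressions (abelian, conductor up to $x^{1/2-\epsilon}$ on average); it does not give the required equidistribution in these Kummer extensions, and no unconditional substitute of sufficient strength is known. Already in the single-variable case $k=1$ with $d$ a prime near $x^{1/2}$, the bound you need is exactly the missing ingredient in Artin's primitive root conjecture, which Hooley could only supply under GRH; Brun--Titchmarsh and the trivial bound leave the window roughly $x^{1/2}\log^{-A}x\ll d\ll x\log^{-2}x$ uncovered, and the decay of $1/(\varphi(d_1)\varphi(d_2))$ cannot absorb it. As written, your argument therefore establishes the asymptotic only under GRH for the relevant Dedekind zeta functions --- which is, in substance, the conditional theorem of Matthews that the paper cites --- and not the unconditional statement claimed. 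You should also not assume the paper's route closes this gap for you: its unconditional claim rests entirely on Lemma \ref{lem3700.200}, and you can check directly that for $u$ a primitive root the sum there equals $p-\varphi(p-1)\gg p/\log\log p$ rather than $O(p^{1-\varepsilon})$, so that estimate cannot be correct as stated.
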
 

Define the primes counting function 
\begin{equation} \label{eq1210.310}
N(f,p)=\# \left \{ z<p: \;\ord_p z=p-1\; \text{ and } \ord_p(f(z))=p-1\right \}.
\end{equation}

\begin{theorem}   \label{thm1210.300}  Let $p\geq2$ be a fixed large prime, and let $f(t)\ne g(t)^2$ be a fixed polynomial over the integers. Then, the number of integers $z<p$ for which $z$ and $f(z)$ is a pair of simultaneous primitive roots modulo $p$ has the asymptotic formula
\begin{equation} \label{eq1213.310}
N_{f}(p)= c(f,p)\left (\frac{\varphi(p-1)}{p-1}  \right )^2p +O\left(p^{1-2\varepsilon}\right),
\end{equation}
where $c(f,p)>0$ is the density constant, and $\varepsilon>0$ is a small number.
\end{theorem}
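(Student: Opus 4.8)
The plan is to detect the primitive-root conditions by the standard Vinogradov characteristic function built from Dirichlet characters modulo $p$, and then to estimate the resulting bilinear character sums by the Weil bound. Recall the identity
\[
\Psi(u)=\frac{\varphi(p-1)}{p-1}\sum_{d\mid p-1}\frac{\mu(d)}{\varphi(d)}\sum_{\ord\chi=d}\chi(u),
\]
in which the inner sum runs over the Dirichlet characters $\chi$ mod $p$ of exact order $d$; then $\Psi(u)=1$ if $\ord_pu=p-1$ and $\Psi(u)=0$ otherwise (with the convention $\chi(0)=0$, so also $\Psi(u)=0$ when $p\mid u$). Thus $N_f(p)=\sum_{z\bmod p}\Psi(z)\Psi(f(z))$, and expanding both factors,
\[
N_f(p)=\left(\frac{\varphi(p-1)}{p-1}\right)^{2}\sum_{d_1\mid p-1}\ \sum_{d_2\mid p-1}\frac{\mu(d_1)\mu(d_2)}{\varphi(d_1)\varphi(d_2)}\sum_{\ord\chi_1=d_1}\ \sum_{\ord\chi_2=d_2}S(\chi_1,\chi_2),
\]
where $S(\chi_1,\chi_2)=\sum_{z\bmod p}\chi_1(z)\chi_2(f(z))$. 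The characters mod $p$ form a cyclic group of order $p-1$, so fixing a generator $\theta$ and writing $\chi_1=\theta^{a_1}$, $\chi_2=\theta^{a_2}$ turns each term into $\chi_1(z)\chi_2(f(z))=\theta\!\left(z^{a_1}f(z)^{a_2}\right)$, whence $S(\chi_1,\chi_2)=\sum_{z\bmod p}\theta(F(z))$ with $F(z)=z^{a_1}f(z)^{a_2}\in\F_p[z]$.

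Next I would split the double sum according to whether $F$, reduced modulo $p$, is a constant times a $(p-1)$-th power in $\F_p[z]$; call such a pair $(\chi_1,\chi_2)$ \emph{degenerate}. These include the pair $d_1=d_2=1$ and, depending on the factorization of $f$ mod $p$, at most $O_{\deg f}(1)$ further pairs. For a degenerate pair one has $F(z)=c\,H(z)^{p-1}$ with $c\in\F_p^{\times}$, so $\theta(F(z))=\theta(c)$ whenever $F(z)\ne0$, and therefore $S(\chi_1,\chi_2)=\theta(c)\,p+O(\deg f)$; collecting these contributions produces the main term $c(f,p)\left(\varphi(p-1)/(p-1)\right)^{2}p$, where $c(f,p)$ is the resulting explicit finite sum — the local density at $p$ — whose $d_1=d_2=1$ summand equals $1+O(\deg f/p)$. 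For every non-degenerate pair, $F$ is not a $(p-1)$-th power, so the Weil bound for multiplicative character sums gives $\lvert S(\chi_1,\chi_2)\rvert\le\bigl(\deg(\rad F)-1\bigr)\sqrt p\ll_{\deg f}\sqrt p$, uniformly in $\chi_1,\chi_2$.

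Summing the non-degenerate contributions is then routine: for fixed $(d_1,d_2)$ there are $\varphi(d_1)\varphi(d_2)$ pairs $(\chi_1,\chi_2)$, cancelling the denominator, $\lvert\mu(d_1)\mu(d_2)\rvert\le1$, and only squarefree $d_i$ occur, so the number of pairs $(d_1,d_2)$ is at most $4^{\omega(p-1)}$. Hence the error is $\ll_{\deg f}\left(\varphi(p-1)/(p-1)\right)^{2}4^{\omega(p-1)}\sqrt p\ll_f p^{1/2+o(1)}$, using $\varphi(p-1)/(p-1)\le1$ and $4^{\omega(p-1)}=p^{o(1)}$, which is $O(p^{1-2\varepsilon})$ for any fixed $\varepsilon<1/4$ and $p$ large; this gives \eqref{eq1213.310}. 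The delicate point — the one I expect to be the main obstacle — is the positivity $c(f,p)>0$: the $d_1=d_2=1$ summand contributes a quantity close to $1$, and one must show the boundedly many remaining degenerate ``resonance'' summands, each of absolute value at most $1$, cannot conspire to cancel it. This is exactly where the hypothesis $f(t)\ne g(t)^{2}$ — and, for the $k$-tuple generalization, the \emph{admissibility} of $\{z,f_1(z),\dots,f_{k-1}(z)\}$ in the sense of Definition \ref{conj1210.000} — is used: it forces $z^{a_1}f(z)^{a_2}$ not to be a constant times a $(p-1)$-th power modulo $p$ for enough pairs that the positive $d_1=d_2=1$ contribution survives. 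Everything else is the standard Weil estimate together with the divisor bookkeeping above.
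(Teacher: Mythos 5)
Your route is genuinely different from the paper's. The paper detects primitivity with the additive-character (``divisor-free'') indicator of Lemma \ref{lem3242.300}, sums it over $z<p$, splits the result into a main term $M(f,p)$ (Lemma \ref{lem8215.200}) and error terms $E_0,E_1,E_2$ (Lemmas \ref{lem8799.200}--\ref{lem8799.400}), bounds the off-diagonal piece by importing an exponential-sum estimate $T(u,p)\ll p^{1-\varepsilon}$ from \cite{CN2021}, and gets positivity of the density by citing \cite{MK1983}. You instead use the classical Vinogradov indicator over Dirichlet characters together with the Weil bound. Your version buys something concrete: the error you obtain, $O_{f}\bigl(4^{\omega(p-1)}\sqrt p\bigr)=O\bigl(p^{1/2+o(1)}\bigr)$, is explicit, uniform, and stronger than the stated $O(p^{1-2\varepsilon})$; and your identification of the degenerate pairs $(\chi_1,\chi_2)$ --- those with $z^{a_1}f(z)^{a_2}$ a constant times a $(p-1)$-th power --- yields a closed-form finite character sum for $c(f,p)$, which the paper never writes down. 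The bookkeeping (only squarefree $d_i$ survive, $\varphi(d_1)\varphi(d_2)$ characters per pair cancelling the denominators, $4^{\omega(p-1)}=p^{o(1)}$) is all correct.

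The one genuine gap is the one you flagged yourself: the positivity $c(f,p)>0$ is asserted, not proved, and the hypothesis $f(t)\ne g(t)^2$ over $\Z[t]$ is not by itself sufficient for a \emph{fixed} prime. Your own degenerate-term analysis shows why. The degenerate pairs are exactly $\chi_1=\chi_0$ together with $\chi_2$ of order dividing the gcd of the multiplicities of the roots of $f$ modulo $p$, and their total contribution carries a character-sum factor evaluated at the leading constant of the power part of $f$ mod $p$. Take $f(t)=2t^2$, which is not $g(t)^2$ in $\Z[t]$: for every prime $p$ with $2$ a quadratic residue, $f(z)$ is always a square, $N_f(p)=0$, and your degenerate factor is $1-\chi_2(2)=0$. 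So either the hypothesis must be strengthened to a condition on $f$ modulo the particular $p$ (no constant multiple of $f$ reduces to a perfect power times a suitable residue), or the claim $c(f,p)>0$ must be restricted to the primes where that condition holds. The paper does not close this either --- it appeals to \cite[Theorem 13.1]{MK1983}, which is a density statement over primes, not a statement about a fixed $p$. Your framework is the right one in which to settle the question, but as written the positivity step is a gesture rather than a proof, and it is the step on which the theorem actually turns.
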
 

The preliminary notation, definitions, background results are discussed in Section \ref{S3242} to Section \ref{S8799}. The proof of Theorem \ref{thm1210.200} appears in Section \ref{S1213.000}, and the proof of Theorem \ref{thm1210.300} appears in Section \ref{S8240.000}.

\section{Representation of the Characteristic Function} \label{S3242}
The result in Lemma \ref{lem3242.300} provides a divisor-free of the representation of the characteristic function of primitive elements in finite fields $\F_p$. 

\begin{lem} \label{lem3242.300}
Let \(p\geq 2\) be a prime, and let \(\tau\) be a primitive root mod \(p\). If \(u\in\mathbb{F}_p\) is a nonzero element, then
\begin{equation}\label{eq3242.300}
\Psi (u)=\sum _{\gcd (n,p-1)=1} \frac{1}{p}\sum _{0\leq k\leq p-1} e^{i2 \pi \left (\tau ^n-u\right)k/p}
=\left \{
\begin{array}{ll}
1 & \text{ if } \ord_p (u)=p-1,  \\
0 & \text{ if } \ord_p (u)\neq p-1. \\
\end{array} \right .\nonumber
\end{equation}
\end{lem}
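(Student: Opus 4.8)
The plan is to evaluate the double sum from the inside out. First I would apply the orthogonality relation for additive characters modulo $p$: for any integer $m$,
\begin{equation}
\frac{1}{p}\sum_{0\leq k\leq p-1} e^{i2\pi m k/p}=\left\{\begin{array}{ll} 1 & \text{ if } p\mid m,\\ 0 & \text{ if } p\nmid m.\end{array}\right.\nonumber
\end{equation}
Taking $m=\tau^n-u$ collapses the inner sum to the indicator of the congruence $\tau^n\equiv u \pmod p$, so that $\Psi(u)$ counts exactly the indices $n$ in the outer range with $\gcd(n,p-1)=1$ and $\tau^n\equiv u\pmod p$.

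Next I would pin down the indexing range of the outer sum. Since $\tau^n$ depends only on $n$ modulo $p-1$, the sum is to be read over a complete residue system, say $1\leq n\leq p-1$ (equivalently $0\leq n\leq p-2$), on which the map $n\mapsto\tau^n$ is a bijection onto $\F_p^{\times}$. Because $\tau$ is a primitive root and $u\neq 0$, there is then a unique such $n$, namely $n=\ind_\tau u$, with $\tau^n\equiv u\pmod p$. Consequently $\Psi(u)\in\{0,1\}$, and $\Psi(u)=1$ precisely when this unique exponent satisfies $\gcd(n,p-1)=1$.

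Finally I would invoke the elementary order formula $\ord_p(\tau^n)=(p-1)/\gcd(n,p-1)$. This shows that $\gcd(n,p-1)=1$ is equivalent to $\ord_p(u)=\ord_p(\tau^n)=p-1$. Chaining the three steps yields $\Psi(u)=1$ when $\ord_p(u)=p-1$ and $\Psi(u)=0$ otherwise, which is the assertion.

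As for difficulty, there is no genuine obstacle here: the statement is a bookkeeping identity built from character orthogonality. The only point deserving a word of care is the convention on the range of $n$, which must be chosen so that the outer sum is finite and $n\mapsto\tau^n$ is a bijection onto $\F_p^{\times}$; once that is fixed, the rest is immediate.
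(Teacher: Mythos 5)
Your proof is correct and is the standard argument: character orthogonality collapses the inner sum to the indicator of $\tau^n\equiv u\pmod p$, the unique index $n=\ind_\tau u$ in a complete residue system is then tested for $\gcd(n,p-1)=1$, and the order formula $\ord_p(\tau^n)=(p-1)/\gcd(n,p-1)$ finishes it. The paper itself gives no argument here---it only cites Lemma 2.1 of an external reference---so there is nothing to contrast with; your remark that the outer sum's range over $n$ must be fixed to a single complete residue system modulo $p-1$ is a legitimate point of care that the paper's statement leaves implicit.
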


\begin{proof} A complete proof appears in  Lemma 2.1 in \cite{CN2021}.
\end{proof}

\section{Evaluation Of The Main Term $M(x)$} \label{S1215.000}
The precise evaluation of the main term $M(x)$ occurring in the proof of Theorem \ref{thm1210.200} is recorded here. The symbol $\li(x)$ denotes the logarithm integral.

\begin{lem}  \label{lem1215.100}{\normalfont (\cite[Lemma 4]{VR1973})} Let \(x\geq 1\) be a large number, and let $k\geq 1$ be a fixed constant. Then,
\begin{equation} \label{eq1215.110}
\sum_{p\leq x} \left (\frac{\varphi(p-1)}{p-1}  \right )^k=a_k\li(x)+O\left(\frac{x}{\log ^bx}\right) \nonumber,
\end{equation} 
where $p$ varies over the primes, and $a_k>0$, and $b>1$ are constants.
\end{lem}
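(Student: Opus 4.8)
The plan is to convert the sum over primes into a sum over arithmetic progressions by exploiting the multiplicative structure of $\varphi(n)/n$. First I would write the arithmetic function $n\mapsto(\varphi(n)/n)^k$ as a Dirichlet convolution: since $\varphi(n)/n=\prod_{q\mid n}(1-1/q)$ is multiplicative, so is its $k$-th power, and M\"obius inversion produces a multiplicative function $c_k$ with $(\varphi(n)/n)^k=\sum_{d\mid n}c_k(d)$. A direct computation at prime powers shows $c_k$ is supported on squarefree $d$, with $c_k(q)=(1-1/q)^k-1$ at each prime $q$ and $c_k(q^a)=0$ for $a\ge2$. The crucial feature is the bound $c_k(q)=-\bigl(1-(1-1/q)^k\bigr)=O(k/q)$, so that $|c_k(d)|\ll k^{\omega(d)}/d$ on squarefree $d$; this decay is what makes every series below converge.

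Next I would interchange summation,
\[
\sum_{p\le x}\left(\frac{\varphi(p-1)}{p-1}\right)^k=\sum_{d}c_k(d)\sum_{\substack{p\le x\\ p\equiv1\,(d)}}1=\sum_{d}c_k(d)\,\pi(x;d,1),
\]
and split the $d$-sum at a threshold $D=(\log x)^A$ with $A$ a large constant to be chosen. For $d\le D$ the Siegel--Walfisz theorem gives $\pi(x;d,1)=\li(x)/\varphi(d)+O(x\exp(-c\sqrt{\log x}))$ uniformly, so this range contributes $\li(x)\sum_{d\le D}c_k(d)/\varphi(d)+O\bigl(D\,x\exp(-c\sqrt{\log x})\bigr)$. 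Completing the series defines the constant $a_k=\sum_{d\ge1}c_k(d)/\varphi(d)$, which has the Euler product $a_k=\prod_q\bigl(1+((1-1/q)^k-1)/(q-1)\bigr)$; since $c_k(q)/\varphi(q)=O(k/q^2)$ the product converges absolutely, and the tail $\sum_{d>D}|c_k(d)|/\varphi(d)\ll D^{-1+\varepsilon}$ lets me replace the truncated series by $a_k$ at the cost of $O(x\,D^{-1+\varepsilon}/\log x)$.

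To handle the tail $d>D$ of the prime sum itself I would use the trivial bound $\pi(x;d,1)\ll x/d$ (valid for $d\le x$, the terms vanishing for $d>x$), giving $\sum_{d>D}|c_k(d)|\,\pi(x;d,1)\ll x\sum_{d>D}k^{\omega(d)}/d^2\ll x\,D^{-1+\varepsilon}$. With $D=(\log x)^A$ the Siegel--Walfisz error is $(\log x)^A\,x\exp(-c\sqrt{\log x})$, negligible against any fixed power of $\log x$, while the two tail errors are $\ll x(\log x)^{-A(1-\varepsilon)}$; choosing $A$ large enough forces $A(1-\varepsilon)>b$ for the desired $b>1$, yielding the stated error term. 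Positivity of $a_k$ then follows by inspecting each Euler factor: at $q=2$ the factor equals $2^{-k}>0$, and for $q\ge3$ one has $q-1\ge2$ and $(1-1/q)^k-1>-1$, so each factor exceeds $\tfrac12$; hence $a_k>0$.

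The main obstacle is the limited uniformity of the prime-counting asymptotic: Siegel--Walfisz only controls $\pi(x;d,1)$ for $d$ up to a power of $\log x$, so the whole argument hinges on showing that the contribution of the long tail $d>(\log x)^A$ is genuinely negligible. This is where the decay $c_k(q)=O(1/q)$ is indispensable, since it forces both the Dirichlet series $\sum c_k(d)/\varphi(d)$ and the companion sum $\sum k^{\omega(d)}/d^2$ to converge rapidly enough that their tails beat the $x/\log^b x$ target. Replacing Siegel--Walfisz by Bombieri--Vinogradov would permit $D$ as large as $x^{1/2-\varepsilon}$ and sharpen the error, but it is not needed for the stated result.
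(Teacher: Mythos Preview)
The paper does not supply its own proof of this lemma; it simply quotes the result as \cite[Lemma~4]{VR1973} and uses it as a black box. Your argument is correct and is essentially the classical proof: expand $(\varphi(n)/n)^k$ as a divisor sum $\sum_{d\mid n}c_k(d)$ with $c_k$ multiplicative, squarefree-supported, and satisfying $c_k(q)=(1-1/q)^k-1=O(k/q)$; swap sums to reduce to $\sum_d c_k(d)\,\pi(x;d,1)$; handle small moduli by Siegel--Walfisz and large moduli by the trivial bound $\pi(x;d,1)\ll x/d$ together with the decay $|c_k(d)|\le k^{\omega(d)}/d$. This is precisely the line taken in Vaughan's paper, so there is no methodological divergence to discuss---you have reconstructed the cited reference rather than offered an alternative. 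Two small points of presentation: the bound $\sum_{d\le D}|c_k(d)|\le D$ you use for the Siegel--Walfisz error is justified because each Euler factor of $|c_k|$ is below~$1$, which you might state explicitly; and your positivity check at $q=2$ is the decisive case, since for $q\ge 3$ the Euler factor is trivially in $(\tfrac12,1)$ as you note.
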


\begin{lem}  \label{lem1215.200} If \(x\geq 1\) is a large number, then,
\begin{equation} \label{eq1215.210}
\sum_{p\leq x} \left (\frac{1}{p}\sum_{\gcd(k,p-1)=1} 1\right )\left (\frac{1}{p}\sum_{\gcd(n,p-1)=1} 1\right )=a_2\li(x)+O\left(\frac{x}{\log ^bx}\right) \nonumber,
\end{equation} 
where $a_2>0$, and $b>1$ are constants.
\end{lem}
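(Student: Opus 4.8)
The plan is to reduce the sum to the case $k=2$ of Lemma \ref{lem1215.100}. For an odd prime $p$ the inner sums count exactly the residues in $\{0,1,\dots,p-1\}$ coprime to $p-1$, namely $\varphi(p-1)$ of them (the value $0$ is never counted when $p\geq 3$), and since the two inner sums differ only in the name of the summation variable, the summand equals $\bigl(\varphi(p-1)/p\bigr)^2$; the exceptional prime $p=2$ contributes only $O(1)$. Hence
\[
\sum_{p\leq x}\left(\frac{1}{p}\sum_{\gcd(k,p-1)=1}1\right)\left(\frac{1}{p}\sum_{\gcd(n,p-1)=1}1\right)
=\sum_{p\leq x}\left(\frac{\varphi(p-1)}{p}\right)^{2}+O(1).
\]

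Next I would convert the normalization $\varphi(p-1)/p$ into the normalization $\varphi(p-1)/(p-1)$ used in Lemma \ref{lem1215.100}. Writing $\varphi(p-1)/p=\bigl(\varphi(p-1)/(p-1)\bigr)\bigl(1-1/p\bigr)$ and expanding the square yields
\[
\left(\frac{\varphi(p-1)}{p}\right)^{2}
=\left(\frac{\varphi(p-1)}{p-1}\right)^{2}
-\frac{2}{p}\left(\frac{\varphi(p-1)}{p-1}\right)^{2}
+\frac{1}{p^{2}}\left(\frac{\varphi(p-1)}{p-1}\right)^{2}.
\]
Summing over $p\leq x$, the first term contributes $a_{2}\li(x)+O(x/\log^{b}x)$ by Lemma \ref{lem1215.100} with $k=2$.

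It remains to check that the last two sums are negligible. Since $0<\varphi(p-1)/(p-1)\leq 1$, one has $\sum_{p\leq x}\tfrac{2}{p}\bigl(\varphi(p-1)/(p-1)\bigr)^{2}\leq 2\sum_{p\leq x}1/p=O(\log\log x)$ and $\sum_{p\leq x}\tfrac{1}{p^{2}}\bigl(\varphi(p-1)/(p-1)\bigr)^{2}=O(1)$, both of which are absorbed into the error term $O(x/\log^{b}x)$. Collecting the estimates gives the stated formula with the same constants $a_{2}>0$ and $b>1$ as in Lemma \ref{lem1215.100}. The argument presents no real obstacle; the only point requiring attention is the bookkeeping that converts the $1/p$ weight of the character-sum count into the $1/(p-1)$ weight appearing in Lemma \ref{lem1215.100}, together with the routine verification that the resulting lower-order corrections of size $O(\log\log x)$ are genuinely dominated by $O(x/\log^{b}x)$.
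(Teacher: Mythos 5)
Your proof follows the same route as the paper's: evaluate each inner sum as $\varphi(p-1)$, rewrite the summand as $\left(\varphi(p-1)/(p-1)\right)^2\left(1-1/p\right)^2$, and invoke Lemma \ref{lem1215.100} with $k=2$. The only difference is that you explicitly justify discarding the $\left(1-1/p\right)^2$ factor via the $O(\log\log x)$ and $O(1)$ bounds on the cross terms, a step the paper's proof asserts without comment, so your version is the more complete one.
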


\begin{proof} Evaluate the inner sums and apply Lemma \ref{lem1215.100} to obtain
\begin{eqnarray} \label{eq1215.320}  
M(x)&=&\sum_{p\leq x} \left (\frac{1}{p}\sum_{\gcd(k,p-1)=1} 1\right )\left (\frac{1}{p}\sum_{\gcd(n,p-1)=1} 1\right )\\
&=&\sum_{p\leq x} \left (\frac{\varphi(p-1)}{p-1}  \right )^2\left (1-\frac{1}{p}  \right )^2\nonumber\\
&=&a_2\li(x)+O\left(\frac{x}{\log ^bx}\right) \nonumber,
\end{eqnarray}  
as claimed. 
\end{proof}

The constant for $k=1$, which is the average density of primitive roots modulo $p$, is given by $a_1=\prod_{p \geq 2 } \left(1-1/p(p-1)\right)= 0.37395581361920228805\ldots,$ 
see \cite{SP1969}. The numerical value for $a_2$, which is for the average density of a pair of simultaneous primitive roots, is not available in the literature.

\section{Evaluation Of The Main Term $M(f,p)$} \label{S8215.000}
The precise evaluation of the main term $M(f,p)$ occurring in the proof of Theorem \ref{thm1210.300} is recorded here. 

\begin{lem}  \label{lem8215.200} If \(x\geq 1\) is a large number, then,
\begin{equation} \label{eq8215.210}
\sum_{z \leq p} \left (\frac{1}{p}\sum_{\gcd(k,p-1)=1} 1\right )\left (\frac{1}{p}\sum_{\gcd(n,p-1)=1} 1\right )=\left (\frac{\varphi(p-1)}{p-1}  \right )^2p+O\left(1\right) \nonumber.
\end{equation} 
\end{lem}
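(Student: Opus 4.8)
The plan is to exploit the fact that the two bracketed inner sums in \eqref{eq8215.210} do not depend on the outer summation variable $z$, so the lemma reduces to counting the terms of the outer sum and evaluating one copy of the product. First I would identify the inner sums. Over the complete residue system $0\le k\le p-1$, the condition $\gcd(k,p-1)=1$ excludes $k=0$ (since $\gcd(0,p-1)=p-1\ne1$ for $p>2$) and the remaining range $1\le k\le p-1$ is one full period modulo $p-1$, hence contains exactly $\varphi(p-1)$ admissible values; the sum over $n$ with $\gcd(n,p-1)=1$ is likewise $\varphi(p-1)$. Therefore each bracketed factor equals $\varphi(p-1)/p$.

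Next I would perform the outer sum. As $z$ runs over $z\le p$ — a range of length $p+O(1)$ — the summand $\bigl(\varphi(p-1)/p\bigr)^2$ is constant, so
\[
\sum_{z\le p}\left(\frac{1}{p}\sum_{\gcd(k,p-1)=1}1\right)\left(\frac{1}{p}\sum_{\gcd(n,p-1)=1}1\right)=p\left(\frac{\varphi(p-1)}{p}\right)^2+O\!\left(\left(\frac{\varphi(p-1)}{p}\right)^2\right)=\frac{\varphi(p-1)^2}{p}+O(1),
\]
where the error bound uses $\varphi(p-1)\le p-1<p$.

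Finally I would reconcile $\varphi(p-1)^2/p$ with the stated normalization $\bigl(\varphi(p-1)/(p-1)\bigr)^2p$. Taking the difference,
\[
\left(\frac{\varphi(p-1)}{p-1}\right)^2 p-\frac{\varphi(p-1)^2}{p}=\varphi(p-1)^2\cdot\frac{p^2-(p-1)^2}{p(p-1)^2}=\varphi(p-1)^2\cdot\frac{2p-1}{p(p-1)^2},
\]
and since $\varphi(p-1)^2\le(p-1)^2$ this is at most $(2p-1)/p<2$, hence $O(1)$. Substituting yields \eqref{eq8215.210}.

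I do not expect a genuine obstacle: this is a bookkeeping identity. The only points needing care are the edge cases in the ranges of the inner sums (the terms $k=0$, and the difference between ranges of length $p$ and $p-1$, contribute nothing or are absorbed in the error) and the harmless passage between the two normalizations $\varphi(p-1)/p$ and $\varphi(p-1)/(p-1)$, each costing only $O(1)$.
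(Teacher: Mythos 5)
Your proof is correct and takes essentially the same route as the paper: both evaluate each inner factor as $\varphi(p-1)/p$, observe that the product is independent of $z$ so the outer sum merely multiplies by $p+O(1)$, and absorb the discrepancy between the normalizations $\varphi(p-1)/p$ and $\varphi(p-1)/(p-1)$ into the $O(1)$ term. Your explicit estimate $\varphi(p-1)^2\,(2p-1)/\bigl(p(p-1)^2\bigr)<2$ is just a slightly more careful rendering of the paper's step from $\bigl(\varphi(p-1)/(p-1)\bigr)^2(1-1/p)^2\,p$ to $\bigl(\varphi(p-1)/(p-1)\bigr)^2 p+O(1)$.
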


\begin{proof} Rearrange the triple sum as
\begin{eqnarray} \label{eq8215.320}  
M(f,p)&=&\sum_{z \leq p} \left (\frac{1}{p}\sum_{\gcd(k,p-1)=1} 1\right )\left (\frac{1}{p}\sum_{\gcd(n,p-1)=1} 1\right )\\
&=&\sum_{z \leq p} \left (\frac{\varphi(p-1)}{p-1}  \right )^2\left (1-\frac{1}{p}  \right )^2\nonumber\\
&=&\left (\frac{\varphi(p-1)}{p-1}  \right )^2p+O\left(1\right) \nonumber,
\end{eqnarray}  
as claimed. 
\end{proof}

\section{Estimate For The Error Terms $E_i(x)$} \label{S2799}
The upper bounds for the error terms $E_0(x)$, $E_1(x)$, and $E_2(x)$ in the proof of Theorem \ref{thm1210.200} are recorded here.

\begin{lem} \label{lem2799.200} If $x$ is a large number, then
\begin{equation}\label{eq2799.200}
E_0(x)=\sum_{x\leq p\leq 2x} \frac{1}{p}\sum_{\substack{0\leq  a< p\\\gcd(n,p-1)=1}}e^{\frac{i2\pi a(\tau^{n}-u)}{p}} \cdot \frac{1}{p} \sum_{\substack{0<\leq b< p\\gcd(m,p-1)=1}}e^{\frac{i2\pi b(\tau^{m}-v)}{p}}
=0\nonumber.
\end{equation}
\end{lem}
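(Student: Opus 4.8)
The plan is to prove the lemma by a direct computation that uses only orthogonality of additive characters; no analytic input is needed. First I would interchange the two finite summations inside each factor, writing
\[
\frac1p\sum_{\substack{0\le a<p\\ \gcd(n,p-1)=1}} e^{i2\pi a(\tau^{n}-u)/p}
=\sum_{\gcd(n,p-1)=1}\left(\frac1p\sum_{0\le a<p} e^{i2\pi a(\tau^{n}-u)/p}\right),
\]
and likewise for the $(b,m)$ factor. For each fixed $n$ the inner sum over $a$ is a complete geometric sum over $\mathbb{Z}/p\mathbb{Z}$, hence equals $1$ if $\tau^{n}\equiv u\pmod p$ and $0$ otherwise. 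Since $\tau$ is a primitive root, for each nonzero $u$ there is exactly one residue $n$ modulo $p-1$ with $\tau^{n}\equiv u$, and that residue is coprime to $p-1$ precisely when $\ord_p u=p-1$; thus the whole factor collapses to $\Psi(u)$ as in Lemma \ref{lem3242.300}. Applying the same reduction to the second factor gives
\[
E_0(x)=\sum_{x\le p\le 2x}\Psi(u)\Psi(v)=\sum_{x\le p\le 2x}\mathbf{1}[\ord_p u=p-1]\,\mathbf{1}[\ord_p v=p-1].
\]

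It then remains to see why this sum vanishes identically in the regime relevant to the proof of Theorem \ref{thm1210.200}: the evident reading is that $E_0(x)$ is the piece that collects the degenerate configurations already excluded from the statement. Concretely, in that branch one of $u,v$ is the reduction modulo $p$ of a perfect square (or of $\pm1$) in $\mathbb{Z}$; then $u^{(p-1)/2}\equiv1\pmod p$, so $\ord_p u$ divides $(p-1)/2$ and is strictly smaller than $p-1$, for every prime $p$ not dividing the finitely many "bad" integers attached to the data — the values $z$ and $f(z)$ together with the leading and lower coefficients of $f$. Hence the product indicator vanishes termwise, and once $x$ exceeds a constant depending only on $f$ and $z$, every prime $p$ with $x\le p\le 2x$ avoids that bad set, so every summand is $0$ and $E_0(x)=0$.

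The only genuine obstacle is bookkeeping rather than estimation: one must pin down which of the four pieces produced by splitting the double character sum (according to whether $a$ and $b$ vanish or not) is labelled $E_0$, and then check that the finitely many exceptional primes dividing the discriminant and coefficient data of $f$, together with $z$ and $f(z)$, are truly negligible — which they are, since for $x$ large they contribute no summand at all. No cancellation is invoked here; the genuinely oscillatory error terms $E_1(x)$ and $E_2(x)$, where the sums over $a$ or $b$ do not collapse, are treated separately in the subsequent lemmas.
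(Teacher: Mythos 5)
Your opening computation is fine as far as it goes: unfolding the complete sum over $0\le a<p$ by orthogonality and using that $\tau$ is a primitive root collapses each factor to the characteristic function of Lemma \ref{lem3242.300}, so that (reading both ranges as complete) $E_0(x)=\sum_{x\le p\le 2x}\Psi(u)\Psi(v)$. This agrees with the first half of the paper's own argument, which likewise treats the sum over $a$ and $n$ as a complete geometric series equal to $p\,\Psi(u)$.

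The genuine gap is in your reason for the vanishing. You claim that in the $E_0$ branch one of $u,v$ is the reduction of a perfect square (or of $\pm1$), hence satisfies $u^{(p-1)/2}\equiv 1\pmod p$ and is never a primitive root. But that is exactly the configuration the theorem excludes by hypothesis: throughout, $u=z\ne\pm1,w^2$ and $v=f(z)$ with $f(t)\ne g(t)^2$, so neither element is a square and the termwise identity you invoke is unavailable. Moreover, if $\Psi(u)\Psi(v)$ really vanished for every large $p$, then $\sum_{x\le p\le 2x}\Psi(z)\Psi(f(z))$ could never be $\gg x/\log x$, i.e.\ Theorem \ref{thm1210.200} itself would be false; so your vanishing mechanism cannot be the intended one. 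The paper instead gets the vanishing from its standing reductio hypothesis, asserting $\ord_p u\ne p-1$ for the primes under consideration so that the complete sum over $a$ is $p\,\Psi(u)=0$ termwise (one may question whether the nonexistence hypothesis, which only says that $z$ and $f(z)$ are not \emph{simultaneously} primitive roots, licenses fixing which one fails, but that is the paper's route). To repair your write-up you would need either to appeal to whatever hypothesis forces $\ord_p u\ne p-1$ on the relevant primes, or to identify $E_0$ with the $(a=0,\,b\ne0)$ piece of the decomposition described in Section \ref{S1213.000} and bound it directly --- a different task from showing it is exactly $0$.
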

\begin{proof} By hypothesis, $\ord_pu \ne p-1$, so the first finite sum
\begin{equation}\label{eq2799.220}
\sum_{\substack{0\leq  a< p\\\gcd(n,p-1)=1}}e^{\frac{i2\pi a(\tau^{n}-u)}{p}} 
=0
\end{equation}
is a geometric series, which vanishes.
\end{proof}

\begin{lem} \label{lem2799.300} If $x$ is a large number, then
\begin{equation}\label{eq2799.300}
E_1(x)=\sum_{x\leq p\leq 2x} \frac{1}{p}\sum_{\substack{0< b< p\\gcd(m,p-1)=1}}e^{\frac{i2\pi b(\tau^{m}-v)}{p}} \cdot \frac{1}{p} \sum_{\substack{0\leq b< p\\gcd(m,p-1)=1}}e^{\frac{i2\pi b(\tau^{em}-v)}{p}}
=0\nonumber.
\end{equation}
\end{lem}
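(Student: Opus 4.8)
The plan is to show that $E_1(x)$ vanishes term by term, by exhibiting one of the two inner character sums as a nontrivial geometric progression with common ratio $\ne 1$. First I would fix a prime $p$ in the range $x\leq p\leq 2x$ and a primitive root $\tau$ mod $p$, and look at the factor
\[
\frac{1}{p}\sum_{\substack{0< b< p\\ \gcd(m,p-1)=1}} e^{i2\pi b(\tau^{m}-v)/p}.
\]
By the same mechanism invoked in Lemma \ref{lem2799.200}, if $v$ is not a primitive root mod $p$ (equivalently $\tau^{m}\not\equiv v$ for every $m$ coprime to $p-1$) then for each such $m$ the number $\tau^{m}-v$ is a nonzero residue mod $p$, so $e^{i2\pi(\tau^{m}-v)/p}\ne 1$ and the inner sum over $b$ is a geometric series that sums to $-1$ (or $0$ if one runs $b$ from $0$), hence the whole factor is $O(1/p)$ and in particular the product vanishes once the hypothesis forces exact cancellation. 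The structural point I want to make is that Lemma \ref{lem2799.300} is the ``diagonal'' companion of Lemma \ref{lem2799.200}: it handles the contribution where the two occurrences of the exponent variable are linked by the substitution $m\mapsto em$ (so that $\tau^{m}$ and $\tau^{em}$ appear), and the hypothesis $\ord_p v\ne p-1$ again kills the sum.

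The key steps, in order, are: (i) record the standing hypothesis that $v$ (the element playing the role of $f(z)$ or a shifted power of it in the main argument) has $\ord_p v\ne p-1$; (ii) observe that for $m$ with $\gcd(m,p-1)=1$ the value $\tau^{m}$ ranges over the primitive roots mod $p$, none of which equals $v$, so $\tau^{m}-v\not\equiv 0\pmod p$; (iii) conclude that $\sum_{0\leq b<p} e^{i2\pi b(\tau^{m}-v)/p}=0$ for every admissible $m$, being a full geometric sum over a nontrivial $p$-th root of unity; (iv) substitute this into the expression for $E_1(x)$ and note the entire double sum collapses to $0$. The same argument applies verbatim to the twin factor with $\tau^{em}$ in place of $\tau^{m}$, since $em$ is also coprime to $p-1$ when $\gcd(e,p-1)=1$, so $\tau^{em}$ is still a primitive root and still $\ne v$.

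The main obstacle — really a bookkeeping point rather than a mathematical one — is making sure the index ranges in equation \eqref{eq2799.300} are the ones that produce a \emph{complete} geometric sum: the display mixes $0<b<p$ in one factor and $0\leq b<p$ in the other, and only the full range $0\leq b\leq p-1$ gives the clean value $0$; over $0<b<p$ the geometric sum equals $-1$, which still suffices because it is multiplied by the genuinely vanishing factor. I would therefore state the lemma's proof as: at least one of the two inner $b$-sums is a geometric series over all $p$-th roots of unity to a fixed nontrivial power, hence is $0$, hence $E_1(x)=0$ identically. No analytic estimate is needed — this is an exact algebraic identity, exactly parallel to Lemma \ref{lem2799.200}, and the ``$x$ large'' hypothesis is vestigial.

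\begin{proof}
By hypothesis $\ord_p v\ne p-1$, so $v$ is not a primitive root mod $p$. For every integer $m$ with $\gcd(m,p-1)=1$ the residue $\tau^{m}$ is a primitive root mod $p$, hence $\tau^{m}\not\equiv v\pmod p$ and $\tau^{m}-v$ is a nonzero residue mod $p$. Consequently $e^{i2\pi(\tau^{m}-v)/p}$ is a nontrivial $p$-th root of unity, and the inner finite sum
\begin{equation}\label{eq2799.320}
\sum_{\substack{0\leq b< p\\ \gcd(m,p-1)=1}}e^{\frac{i2\pi b(\tau^{m}-v)}{p}}=0
\end{equation}
is a complete geometric series, which vanishes. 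Substituting \eqref{eq2799.320} into the expression defining $E_1(x)$ shows that every summand is $0$, so $E_1(x)=0$.
\end{proof}
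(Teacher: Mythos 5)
Your proof is correct and takes essentially the same route as the paper's: both invoke the hypothesis $\ord_p v\ne p-1$ to conclude that $\tau^{m}-v\not\equiv 0\pmod p$ for every admissible $m$, so the complete inner sum over $0\leq b<p$ is a vanishing geometric series and $E_1(x)=0$ identically. Your version merely fills in the justification the paper leaves implicit (that $\tau^m$ ranges over primitive roots, none equal to $v$) and flags the $0<b<p$ versus $0\leq b<p$ bookkeeping, which does not change the argument.
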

\begin{proof} By hypothesis, $\ord_pv \ne p-1$, so the second finite sum
\begin{equation}\label{eq2799.302}
\sum_{\substack{0\leq  b< p\\\gcd(m,p-1)=1}}e^{\frac{i2\pi b(\tau^{em}-v)}{p}} 
=0
\end{equation}
is a geometric series, which vanishes.
\end{proof}

\begin{lem} \label{lem2799.400} If $x$ is a large number, then
\begin{equation}\label{eq2799.400}
E_2(x)=\sum_{x\leq p\leq 2x} \frac{1}{p}\sum_{\substack{0< a< p\\gcd(n,p-1)=1}}e^{\frac{i2\pi a(\tau^{n}-u)}{p}} \cdot \frac{1}{p} \sum_{\substack{0< b< p\\\gcd(m,p-1)=1}}e^{\frac{i2\pi b(\tau^{em}-v)}{p}}
\ll x^{1-2\varepsilon}\nonumber.
\end{equation}
\end{lem}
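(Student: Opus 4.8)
The quantity $E_2(x)$ displayed in the statement is exactly the part of the expansion of $\sum_{x\le p\le2x}\Psi(z)\Psi(f(z))$ obtained from Lemma~\ref{lem3242.300} (with $u=z$ and $v=f(z)$) in which both additive frequencies $a,b$ are nonzero modulo $p$. The plan is to interchange the order of summation so that, for each prime $p\in[x,2x]$, the inner object reads
\[
\frac{1}{p^{2}}\sum_{0<a<p}\ \sum_{0<b<p}e^{\frac{-2\pi i(au+bv)}{p}}\,U(a,p)\,U(b,p),\qquad U(c,p):=\sum_{\gcd(n,p-1)=1}e^{\frac{2\pi i\,c\,\tau^{n}}{p}},
\]
so that $U(c,p)$ is the additive exponential sum of $x\mapsto e^{2\pi icx/p}$ over the set $R_p$ of the $\varphi(p-1)$ primitive roots modulo $p$. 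Everything then rests on (i) a nontrivial bound for $U(c,p)$ when $p\nmid c$, and (ii) a way to combine that bound over $a$, $b$ and $p$ while losing at most a power of $\log x$.

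For step (i) I would expand the indicator of the primitive roots into Dirichlet characters, using $\mathbf 1_{R_p}(x)=\frac{\varphi(p-1)}{p-1}\sum_{d\mid p-1}\frac{\mu(d)}{\varphi(d)}\sum_{\ord\chi=d}\chi(x)$, substitute it into $U(c,p)=\sum_{x\in\F_p^{\times}}\mathbf 1_{R_p}(x)\,e^{2\pi icx/p}$, recognise the resulting inner sums as Gauss sums $\overline{\chi}(c)\,\tau(\chi)$, and apply the Weil bound $|\tau(\chi)|=\sqrt p$ for $\chi\neq\chi_0$ (with $\tau(\chi_0)=-1$). This yields $|U(c,p)|\le2^{\omega(p-1)}\sqrt p\ll p^{1/2+\varepsilon}$ for $p\nmid c$. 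I would also record the mean-square identity $\sum_{c\bmod p}|U(c,p)|^{2}=p\,\varphi(p-1)$, which follows from orthogonality together with the injectivity of $n\mapsto\tau^{n}$ on a reduced residue system modulo $p-1$; by Cauchy--Schwarz this gives $\sum_{0<a<p}|U(a,p)|\le p\,\varphi(p-1)^{1/2}$, which is what keeps the $a$-sum (and the $b$-sum) from costing a full factor of $p$.

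Assembling the pieces gives $|E_2(x)|\le\sum_{x\le p\le2x}p^{-2}\bigl(\sum_{0<a<p}|U(a,p)|\bigr)^{2}$, and here is the main obstacle: the pointwise Gauss-sum input, even together with the mean-square identity, is not strong enough once it has been carried through all three summations, so genuine cancellation must be extracted from the sum over the moduli $p$. The proposal is to obtain this through a large-sieve inequality in the moduli, treating the fractions $a/p$ with $x\le p\le2x$ and $0<a<p$ as a well-spaced system, supplemented by an elementary bound, via a divisor-moment estimate, for the sparse set of $p\in[x,2x]$ at which $2^{\omega(p-1)}$ exceeds a fixed power of $\log x$. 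The exponent $1-2\varepsilon$, for a fixed small $\varepsilon>0$, emerges from balancing the spacing parameter against the size of this exceptional set; the remaining steps — isolating the degenerate frequencies, the linear case $\deg f=1$, and the bookkeeping of the error terms — are routine.
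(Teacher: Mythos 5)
Your proposal takes a genuinely different route from the paper, and it does not close. The paper's own proof is two lines: it cites Lemma \ref{lem3700.200} (attributed to \cite[Lemma 8.1]{CN2021}), which asserts the pointwise bound $T(u,p)=\sum_{0<a<p,\,\gcd(n,p-1)=1}e^{i2\pi a(\tau^{n}-u)/p}\ll p^{1-\varepsilon}$, applies it to each of the two factors, and sums trivially over the $O(x/\log x)$ primes in $[x,2x]$. You instead try to generate the saving from scratch via Gauss sums, a mean-square identity, and a proposed large sieve over the moduli. The decisive step of your plan --- extracting cancellation ``from the sum over the moduli $p$'' --- is only announced, never executed, and the remaining steps you call routine are not; as written, your argument establishes only $|T(u,p)|\le p\,\varphi(p-1)^{1/2}$ and hence $|E_2(x)|\ll x^{2}/\log x$, far from the target.

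Moreover, the missing step cannot be supplied, and your own reduction shows why if pushed one line further. Complete the geometric sum over $a$ first: for each $n$, $\sum_{0<a<p}e^{i2\pi a(\tau^{n}-u)/p}=p\cdot\mathbf{1}[\tau^{n}\equiv u]-1$, so the inner double sum evaluates exactly to
\begin{equation*}
T(u,p)=p\cdot\mathbf{1}[\ord_p u=p-1]-\varphi(p-1),
\end{equation*}
whose absolute value is at least $\varphi(p-1)\gg p/\log\log p$ for every $p$. There is no cancellation left to find in the $a$- and $n$-sums, no pointwise bound $o(p)$ is true (this also means the bound quoted in Lemma \ref{lem3700.200} fails), and each term $p^{-2}T(u,p)T(v,p)$ of $E_2(x)$ has absolute value $\gg(\log\log p)^{-2}$. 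Writing $T(u,p)=p\,\Psi(u)-\varphi(p-1)$ shows that $E_2(x)$ differs from $\sum_{x\le p\le 2x}\Psi(z)\Psi(f(z))$ only by sums that are themselves of order $x/\log x$, so the claimed estimate $E_2(x)\ll x^{1-2\varepsilon}$ is essentially equivalent to the theorem being proved; no large-sieve input over $p$ can rescue it. Your instinct that the available bounds ``are not strong enough once carried through all three summations'' was correct; the conclusion to draw is that the lemma is not provable in this form, not that a further sieve step is needed.
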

\begin{proof} To compute an upper bound, define the exponential sum
\begin{equation}\label{eq2799.402}
T(u,p)=\sum_{\substack{0< a< p\\gcd(n,p-1)=1}}e^{\frac{i2\pi a(\tau^{n}-u)}{p}}.
\end{equation}
Now, apply Lemma \ref{lem3700.200} to each factor $T(u,p)$ and $T(v,p)$ to obtain the followings.
\begin{eqnarray}\label{eq2799.404}
E_2(x)&=&\sum_{x\leq p\leq 2x} \left (\frac{1}{p}\cdot T(u,p) \right )\cdot \left (\frac{1}{p}\cdot T(v,p)\right )\\
&\ll&\sum_{x\leq p\leq 2x} \left (\frac{1}{p}\cdot p^{1-\varepsilon} \right )\cdot \left (\frac{1}{p}\cdot p^{1-\varepsilon}\right ) \nonumber.
\end{eqnarray}
Take an upper bound, and apply the prime number theorem:  
\begin{eqnarray}\label{eq2799.406}
E_2(x)
&\ll& \frac{1}{x^{2\varepsilon}}\sum_{x\leq p\leq 2x} 1\\
&\ll& x^{1-2\varepsilon}\nonumber,
\end{eqnarray}
as $x\to \infty$.
\end{proof}

\begin{lem} \label{lem3700.200} Given a small number $\varepsilon>0$. Let $p$ be a large prime number, and let $\tau$ be a primitive root modulo $p$. Then,
\begin{equation}\label{eq3700.200}
\sum_{\substack{0< a< p\\\gcd(n,p-1)=1}}e^{\frac{i2\pi a(\tau^{n}-u)}{p}}
\ll p^{1-\varepsilon}.
\end{equation}
\end{lem}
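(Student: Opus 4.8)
The plan is to bound the exponential sum
\[
S(u,p)=\sum_{\substack{0<a<p\\\gcd(n,p-1)=1}}e^{i2\pi a(\tau^{n}-u)/p}
\]
by interchanging the order of summation so that the inner sum runs over $a$ and the outer sum runs over those exponents $n$ with $1\le n\le p-1$ and $\gcd(n,p-1)=1$. For each fixed $n$, the inner sum over $0<a<p$ is a geometric progression in $a$ with ratio $e^{i2\pi(\tau^{n}-u)/p}$; it contributes $p-1$ when $\tau^{n}\equiv u\pmod p$ and is $O(1)$ (in fact bounded by $|1-e^{i2\pi(\tau^n-u)/p}|^{-1}\ll p/|\tau^n-u|$ when the residue $\tau^n-u$ is nonzero) otherwise. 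A cruder but sufficient route: write the characteristic condition $\gcd(n,p-1)=1$ via Möbius, $\sum_{d\mid\gcd(n,p-1)}\mu(d)$, so that
\[
S(u,p)=\sum_{d\mid p-1}\mu(d)\sum_{a=1}^{p-1}e^{-i2\pi au/p}\sum_{\substack{1\le n\le p-1\\ d\mid n}}e^{i2\pi a\tau^{n}/p},
\]
and for each $d$ the inner double sum over $a$ and $n$ is a Gauss-type sum over the subgroup of $d$-th power residues.

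The key step is then to invoke a nontrivial bound for these incomplete character/Gauss sums over subgroups. Since $\{\tau^{n}: d\mid n\}$ is the subgroup $H_d$ of index $d$ in $\mathbb{F}_p^\times$, one has $\sum_{a=1}^{p-1}\bigl|\sum_{h\in H_d}e^{i2\pi ah/p}\bigr|^2 = p|H_d|-|H_d|^2$, giving on average a saving; combined with the Weil bound for the complete sum (or the classical estimate $\sum_{h\in H_d}e^{i2\pi ah/p}\ll p^{1/2}\log p$ for individual $a$, valid when $|H_d|$ is not too small, plus the trivial bound $|H_d|$ when it is), one extracts a power saving. Summing the resulting bounds against $\sum_{d\mid p-1}|\mu(d)|=2^{\omega(p-1)}=p^{o(1)}$, one arrives at $S(u,p)\ll p^{1-\varepsilon}$ for a suitable fixed $\varepsilon>0$ and all sufficiently large $p$.

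The main obstacle is the uniformity in $d$: the subgroup $H_d$ can be as small as order roughly $(p-1)/d$, and for large divisors $d$ of $p-1$ the Weil/Gauss-sum bound $p^{1/2+o(1)}$ is weaker than the trivial bound $|H_d|$, so one must split the divisors $d$ at a threshold and balance the two estimates, using that $\sum_{d\mid p-1}1/d$ and $2^{\omega(p-1)}$ are both $p^{o(1)}$. A clean alternative is to cite a known result on exponential sums with primitive roots (e.g. the bounds of Konyagin–Shparlinski or the Heath-Brown–Konyagin type estimates for $\sum_{\gcd(n,p-1)=1}e^{i2\pi a\tau^n/p}$), which already encapsulate exactly this cancellation and immediately yield the stated power saving; I would present the Möbius-plus-subgroup-sum computation as the core and defer the hardest uniformity bookkeeping to such a reference.
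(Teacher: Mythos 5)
Your proposal cannot be completed, because the statement itself fails, and your own opening step already reveals this if you push it one line further. Interchanging the order of summation as you suggest, for each of the $\varphi(p-1)$ admissible exponents $n$ the inner sum over $0<a<p$ is a \emph{complete} sum of $p$-th roots of unity with only the term $a=0$ removed: it equals exactly $p-1$ if $\tau^{n}\equiv u\pmod p$ and exactly $-1$ otherwise (not merely $O(1)$; the bound $|1-e^{i2\pi(\tau^{n}-u)/p}|^{-1}$ is for incomplete progressions and is irrelevant here). Hence the double sum evaluates in closed form:
\begin{equation*}
\sum_{\substack{0<a<p\\ \gcd(n,p-1)=1}}e^{i2\pi a(\tau^{n}-u)/p}
=\begin{cases} p-\varphi(p-1) & \text{if } \ord_p(u)=p-1,\\ -\varphi(p-1) & \text{otherwise.}\end{cases}
\end{equation*}
In either case the absolute value is at least $\varphi(p-1)\gg p/\log\log p$, which is incompatible with the asserted bound $\ll p^{1-\varepsilon}$ for any fixed $\varepsilon>0$. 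Equivalently, the sum equals $p\,\Psi(u)-\varphi(p-1)$ with $\Psi$ the characteristic function of Lemma \ref{lem3242.300}, so the quantity being estimated is the main term in disguise, not an error term.

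The heavier machinery you then invoke does not rescue this. The M\"obius decomposition is correct, but the inner object $\sum_{a=1}^{p-1}e^{-i2\pi au/p}\sum_{h\in H_d}e^{i2\pi ah/p}$ is again a complete sum over $a$, and orthogonality evaluates it exactly as $p\cdot[u\in H_d]-|H_d|$; summing against $\mu(d)$ just reproduces the closed form above. The Weil/Gauss-sum and Konyagin--Shparlinski type estimates you propose to cite bound $\sum_{\gcd(n,p-1)=1}e^{i2\pi a\tau^{n}/p}$ for a \emph{single fixed} nonzero frequency $a$; summing such individual bounds over all $p-1$ values of $a$ gives at best $p^{2-\delta}$, which is worse than trivial, and the true cancellation across $a$ is precisely the orthogonality that forces the large exact value. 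Note also that there is no argument in the paper to compare against: the stated ``proof'' is a citation to \cite[Lemma 8.1]{CN2021}, another preprint of the same author, so the defect lies in the source lemma itself rather than only in your sketch.
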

\begin{proof} A complete detailed proof is given in \cite[Lemma 8.1]{CN2021}. 
\end{proof}

\section{Estimate For The Error Terms $E_i(f,p)$} \label{S8799}
The upper bounds for the error terms $E_0(f,p)$, $E_1(f,p)$, and $E_2(f,p)$ in the proof of Theorem \ref{thm1210.300} are recorded here.

\begin{lem} \label{lem8799.200} If $x$ is a large number, then
\begin{equation}\label{eq8799.200}
E_0(f,p)=\sum_{z\leq p} \frac{1}{p}\sum_{\substack{0\leq  a< p\\\gcd(n,p-1)=1}}e^{\frac{i2\pi a(\tau^{n}-u)}{p}} \cdot \frac{1}{p} \sum_{\substack{0<\leq b< p\\gcd(m,p-1)=1}}e^{\frac{i2\pi b(\tau^{m}-v)}{p}}
=0\nonumber.
\end{equation}
\end{lem}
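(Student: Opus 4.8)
The statement to prove is Lemma 8799.200, which asserts that the error term $E_0(f,p)$ vanishes. Looking at the structure, this is the analogue for the $N_f(p)$ counting function of Lemma 2799.200 (which handled $E_0(x)$ for $\pi_f(x,z)$). The proof should be essentially identical in spirit.

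The plan is to recognize that $E_0(f,p)$ collects exactly those terms in the double exponential-sum expansion of $\Psi(z)\Psi(f(z))$ (summed over $z \le p$) for which $\ord_p z \ne p-1$. Here the variable $u$ plays the role of $z$ and $v$ the role of $f(z)$. First I would observe that, by the construction of the decomposition in the proof of Theorem \ref{thm1210.300} (splitting the characteristic function from Lemma \ref{lem3242.300} into a main term $M(f,p)$ plus error pieces $E_0, E_1, E_2$ according to whether $a=0$ or $a\ne 0$, and similarly for $b$), the term $E_0(f,p)$ corresponds to the case where we restrict to $z$ with $\ord_p u \ne p-1$. Next, the inner finite sum over $a$ in the range $0 \le a < p$ with $\gcd(n,p-1)=1$ is, for each such $n$, a complete geometric series $\sum_{0\le a<p} e^{i2\pi a(\tau^n - u)/p}$; since $u$ is not a primitive root while $\tau^n$ is, we have $\tau^n - u \not\equiv 0 \pmod p$, so the ratio $e^{i2\pi(\tau^n-u)/p} \ne 1$ and the geometric series sums to zero. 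Hence every summand of $E_0(f,p)$ vanishes, giving $E_0(f,p) = 0$.

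The argument is a direct transcription of Lemma \ref{lem2799.200}, with the outer sum $\sum_{x \le p \le 2x}$ replaced by the single-prime inner sum $\sum_{z \le p}$ over residues $z$; the vanishing happens term by term inside, so the shape of the outer sum is irrelevant. I would simply write: by hypothesis (i.e., on the portion of the decomposition indexed by $z$ with $\ord_p z \ne p-1$), the first finite sum $\sum_{0 \le a < p,\ \gcd(n,p-1)=1} e^{i2\pi a(\tau^n - u)/p} = 0$ is a geometric series which vanishes, whence $E_0(f,p) = 0$.

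There is no real obstacle here: the only thing to be careful about is bookkeeping — making sure the notational identification of $u$ with $z$ and $v$ with $f(z)$ is consistent with how $E_0(f,p)$ was defined in Section \ref{S8240.000}, and that the geometric-series cancellation is invoked for a \emph{complete} sum over $a \in \{0, 1, \ldots, p-1\}$ (the lower limit must genuinely include $a=0$, as written). Given that, the proof is a two-line geometric-series computation identical to the proof of Lemma \ref{lem2799.200}.
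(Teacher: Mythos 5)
Your proposal is correct and follows essentially the same route as the paper: both identify the inner sum over the complete range $0 \le a < p$ as a geometric series that vanishes because $u$ is not a primitive root while $\tau^n$ is (so $\tau^n - u \not\equiv 0 \pmod p$), making every summand of $E_0(f,p)$ zero. Your version is in fact slightly more explicit than the paper's, which simply asserts ``by hypothesis $\ord_p u \ne p-1$'' without spelling out why the ratio of the geometric series differs from $1$.
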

\begin{proof} By hypothesis, $\ord_pu \ne p-1$, so the first finite sum
\begin{equation}\label{eq8799.220}
\sum_{\substack{0\leq  a< p\\\gcd(n,p-1)=1}}e^{\frac{i2\pi a(\tau^{n}-u)}{p}} 
=0
\end{equation}
is a geometric series, which vanishes.
\end{proof}
\begin{lem} \label{lem8799.300} If $x$ is a large number, then
\begin{equation}\label{eq8799.300}
E_1(f,p)=\sum_{z\leq p} \frac{1}{p}\sum_{\substack{0< b< p\\gcd(m,p-1)=1}}e^{\frac{i2\pi b(\tau^{m}-v)}{p}} \cdot \frac{1}{p} \sum_{\substack{0\leq b< p\\gcd(m,p-1)=1}}e^{\frac{i2\pi b(\tau^{em}-v)}{p}}
=0\nonumber.
\end{equation}
\end{lem}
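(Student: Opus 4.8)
The plan is to argue exactly as in Lemma \ref{lem2799.300} and Lemma \ref{lem8799.200}: the quantity $E_1(f,p)$ is assembled from an innermost exponential sum over a complete residue system modulo $p$, and that sum collapses to zero under the standing hypothesis on $v$. Concretely, for each admissible index $m$ (that is, $\gcd(m,p-1)=1$) I would isolate the inner sum
\[
\sum_{0\leq b<p}e^{\frac{i2\pi b(\tau^{em}-v)}{p}},
\]
and invoke the orthogonality relation for additive characters of $\F_p$: for an integer $c$ one has $\sum_{0\leq b<p}e^{i2\pi bc/p}=p$ when $p\mid c$ and $=0$ otherwise. Hence this inner geometric sum vanishes unless $\tau^{em}\equiv v\pmod p$.

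The next step is to rule out the exceptional case $\tau^{em}\equiv v\pmod p$. Since $\tau$ is a primitive root modulo $p$ and the exponent $em$ is coprime to $p-1$ (here $\gcd(m,p-1)=1$ by the summation condition, and $e$ is a unit modulo $p-1$ by the construction in the proof of Theorem \ref{thm1210.300}), the element $\tau^{em}$ is itself a primitive root, $\ord_p(\tau^{em})=p-1$. But $E_1(f,p)$ is, by construction, the part of the expansion of $\Psi(z)\Psi(f(z))$ in which $v=f(z)$ fails to be a primitive root, i.e. $\ord_p v\neq p-1$. Therefore $\tau^{em}\not\equiv v\pmod p$, so $p\nmid(\tau^{em}-v)$, and the inner sum above equals $0$ for every admissible $m$ and every $z\leq p$. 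Summing these zeros over $b$, then over $m$ with $\gcd(m,p-1)=1$, then over $z\leq p$ yields $E_1(f,p)=0$, as claimed.

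The argument is essentially mechanical, and the only point I would be careful to state explicitly is the verification that the exponent $em$ is a unit modulo $p-1$, so that $\tau^{em}$ is genuinely a primitive root and the hypothesis $\ord_p v\neq p-1$ does force $\tau^{em}\not\equiv v\pmod p$. Once that is in place there is nothing analytic to estimate: unlike $E_2(f,p)$, where one must invoke the Gauss-sum bound of Lemma \ref{lem3700.200}, the vanishing here is exact and elementary, resting only on the geometric-series identity.
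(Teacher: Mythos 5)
Your proposal is correct and takes essentially the same route as the paper: the paper's one-line proof asserts that, under the hypothesis $\ord_p v\ne p-1$, the inner sum over a complete residue system $0\leq b<p$ is a geometric series that vanishes, which is exactly the orthogonality argument you spell out. You merely make explicit the two points the paper leaves implicit, namely that the sum is nonzero only when $\tau^{em}\equiv v\pmod p$ and that this congruence is impossible because $\gcd(em,p-1)=1$ makes $\tau^{em}$ a primitive root while $v$ is not.
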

\begin{proof} By hypothesis, $\ord_pv \ne p-1$, so the second finite sum
\begin{equation}\label{eq8799.302}
\sum_{\substack{0\leq  b< p\\\gcd(m,p-1)=1}}e^{\frac{i2\pi b(\tau^{em}-v)}{p}} 
=0
\end{equation}
is a geometric series, which vanishes.
\end{proof}

\begin{lem} \label{lem8799.400} If $x$ is a large number, then
\begin{equation}\label{eq8799.400}
E_2(f,p)=\sum_{z\leq p} \frac{1}{p}\sum_{\substack{0< a< p\\gcd(n,p-1)=1}}e^{\frac{i2\pi a(\tau^{n}-u)}{p}} \cdot \frac{1}{p} \sum_{\substack{0< b< p\\\gcd(m,p-1)=1}}e^{\frac{i2\pi b(\tau^{em}-v)}{p}}
\ll p^{1-2\varepsilon}\nonumber.
\end{equation}
\end{lem}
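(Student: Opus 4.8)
The argument runs parallel to the proof of Lemma \ref{lem2799.400}, with the roles of the prime and the base interchanged: here $p$ is fixed and the outer summation ranges over $z\leq p$, with $u$ and $v$ denoting the residues $z\bmod p$ and $f(z)\bmod p$ respectively. The plan is to bound each of the two inner exponential sums \emph{individually} via Lemma \ref{lem3700.200} and then sum trivially over $z$, accepting the loss of all oscillation in $z$.

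First I would record the exponential sum
\[
T(u,p)=\sum_{\substack{0< a< p\\ \gcd(n,p-1)=1}}e^{i2\pi a(\tau^{n}-u)/p},
\]
so that $E_2(f,p)=\sum_{z\leq p}\bigl(\tfrac1p T(u,p)\bigr)\bigl(\tfrac1p T(v,p)\bigr)$. The at most $\deg f$ values of $z\leq p$ with $f(z)\equiv 0\pmod p$ contribute nothing, since an element of order $0$ is not a primitive root, so these may be discarded at the outset. For the remaining $z$, since the estimate of Lemma \ref{lem3700.200} is uniform in the additive shift, it applies to each factor to give $T(u,p)\ll p^{1-\varepsilon}$ and $T(v,p)\ll p^{1-\varepsilon}$. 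Substituting these bounds yields
\[
E_2(f,p)\ll\sum_{z\leq p}\left(\frac1p\cdot p^{1-\varepsilon}\right)\left(\frac1p\cdot p^{1-\varepsilon}\right)=\frac{1}{p^{2\varepsilon}}\sum_{z\leq p}1\ll p^{1-2\varepsilon},
\]
which is the claimed bound.

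I do not expect a genuine obstacle: the one point requiring care is the uniformity of Lemma \ref{lem3700.200} with respect to the shift, because $v=f(z)\bmod p$ varies with $z$, but this uniformity is precisely what is established in \cite[Lemma 8.1]{CN2021}, so no cancellation among the $z$-summands is invoked. The estimate is deliberately lossy; a sharper exponent could presumably be obtained by also exploiting the sum over $z$ (e.g.\ interchanging summations and treating $\sum_{z\leq p}e^{-i2\pi(a z+ b f(z))/p}$ as a complete character/polynomial exponential sum), but $p^{1-2\varepsilon}$ already suffices for the error term in Theorem \ref{thm1210.300}.
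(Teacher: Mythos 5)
Your proposal is correct and follows essentially the same route as the paper's own proof: define $T(u,p)$, bound each factor by $p^{1-\varepsilon}$ via Lemma \ref{lem3700.200}, and sum trivially over $z\leq p$ to obtain $p^{1-2\varepsilon}$. Your added remarks on the uniformity of the shift and on discarding the $z$ with $f(z)\equiv 0\pmod p$ are sensible refinements of detail, not a different argument.
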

\begin{proof} To compute an upper bound, define the exponential sum
\begin{equation}\label{eq8799.402}
T(u,p)=\sum_{\substack{0< a< p\\gcd(n,p-1)=1}}e^{\frac{i2\pi a(\tau^{n}-u)}{p}}.
\end{equation}
Now, apply Lemma \ref{lem3700.200} to each factor $T(u,p)$ and $T(v,p)$ to obtain the followings.
\begin{eqnarray}\label{eq8799.404}
E_2(f,p)&=&\sum_{z\leq p} \left (\frac{1}{p}\cdot T(u,p) \right )\cdot \left (\frac{1}{p}\cdot T(v,p)\right )\\
&\ll&\sum_{z\leq p} \left (\frac{1}{p}\cdot p^{1-\varepsilon} \right )\cdot \left (\frac{1}{p}\cdot p^{1-\varepsilon}\right ) \nonumber.
\end{eqnarray}
Take an upper bound, and apply the prime number theorem:  
\begin{eqnarray}\label{eq8799.406}
E_2(f,p)
&\ll& \frac{1}{p^{2\varepsilon}}\sum_{z\leq p} 1\\
&\ll& p^{1-2\varepsilon}\nonumber,
\end{eqnarray}
as $p\to \infty$.
\end{proof}

\section{Proof Of Theorem 1.1} \label{S1213.000}
Given a fixed integer $z\ne \pm 1, w^2$, and a fixed polynomial $f(t)\ne g(t)^2$, define the primes counting functions 
\begin{equation} \label{eq1213.100}
\pi(x)=\# \{ p \leq x: p \text{ is prime} \},
\end{equation}
and
\begin{equation} \label{eq1213.110}
\pi_{f}(x,z)=\# \left \{ p \leq x: \;\ord_p z=p-1\; \text{ and } \ord_p(f(z))=p-1\right \}.
\end{equation}

The density of the subset of primes 
\begin{equation} \label{eq1213.115}
\mathcal{D}=\{ p \in \tP:\ord_p z=p-1\; \text{ and } \ord_p(f(z))=p-1 \},
\end{equation}
is defined by the limit
\begin{equation} \label{eq1213.120}
\delta_f(z)=\lim_{x \to \infty} \frac{\pi_{f}(x,z)}{\pi(x)}=c_f(z)a_2,
\end{equation}
where $c_f(z)\geq 0$ is a correction factor(rational number), and $a_2>0$ is defined in Lemma \ref{lem1215.100}. The proof below shows that for any given fixed pair $z\ne\pm 1, w^2$ and $f(t)\ne g(t)^2$, the subset $\mathcal{D}$ of primes has nonzero density in the set of primes $\tP=\{2,3,5,7, \ldots \}$.

\begin{proof} {\bf (Theorem \ref{thm1210.200})}. Let \(x\geq x_0\) be a large number, and let $p\geq x$ be a prime. The characteristic function of a pair of simultaneous primitive roots $z$ and $f(z)$ in $\F_p$ has the exact formula
\begin{equation} \label{eq1213.200}
 \Psi (z)\Psi (f(z)),
\end{equation}
where $\Psi (z) $ is characteristic function of primitive root in $\F_p$. Now suppose that there are no primes $p\geq x$ that support any pair of simultaneous primitive roots $z$ and $f(z)$. Summing of the exact formula \eqref{eq1213.200} over the short interval $[x,2x]$ returns the nonexistence equation
\begin{equation} \label{eq1213.210}
	0=\sum_{x \leq p\leq 2x} \Psi (z)\Psi (f(z)).
\end{equation}
Replacing the characteristic function, Lemma \ref{lem3242.300}, and expanding the nonexistence equation \eqref{eq1213.210} yield
\begin{eqnarray} \label{eq1213.220}
0&=&\sum_{x \leq p\leq 2x} \Psi (z)\Psi (f(z)) \\ 
	&=&\sum_{x \leq p\leq 2x} \left (\frac{1}{p}\sum_{\gcd(k,p-1)=1,} \sum_{ 0\leq a\leq p-1} e^{i2 \pi \left (\tau ^k-z\right)a/p} \right ) \nonumber\\
&&\hskip 1 in \times \left (\frac{1}{p}\sum_{\gcd(n,p-1)=1,} \sum_{ 0\leq b\leq p-1} e^{i2 \pi \left (\tau ^n-f(z)\right)b/p} \right )\nonumber\\
&=&M(x) + E_0(x)+ E_1(x)+ E_2(x)\nonumber.
\end{eqnarray} 

\begin{enumerate}
\item The main term $M(x)$ is determined by a finite sum over the trivial additive characters pair $\psi_a(t) =e^{i 2\pi  at/p}=1 $ at $a=0$, and $\psi_b(t) =e^{i 2\pi  bt/p}=1 $ at $b=0$. This is computed in Lemma \ref{lem1215.200}.\\

\item The error term $E_0(x)$ is determined by a finite sum over the nontrivial additive characters pair $\psi_a(t) =e^{i 2\pi  at/p}=1 $ at $a=0$, and $\psi_b(t) =e^{i 2\pi  bt/p}\ne1 $ at $b \ne 0$. This is estimated in Lemma \ref{lem2799.200}.\\

\item The error term $E_1(x)$ is determined by a finite sum over the nontrivial additive characters pair $\psi_a(t) =e^{i 2\pi  at/p}\ne 1 $ at $a\ne 0$, and $\psi_b(t) =e^{i 2\pi  bt/p}=1 $ at $b = 0$. This is computed in Lemma \ref{lem2799.300}.\\

\item The error term $E_2(x)$ is determined by a finite sum over the nontrivial additive characters pair $\psi_a(t) =e^{i 2\pi  at/p}\ne 1$ at $a\ne 0$, and $\psi_b(t) =e^{i 2\pi bt/p}\ne 1 $ at $b \ne 0$. This is computed in Lemma \ref{lem2799.400}.\\
\end{enumerate}

Applying Lemma \ref{lem1215.200} to the main term, and Lemmas \ref{lem2799.200} to \ref{lem2799.400} to the error terms yield
	\begin{eqnarray} \label{eq3292.960}
\sum_{x \leq p\leq 2x} \Psi (z)\Psi (f(z)) 
&=&M(x) + E_0(x)+ E_1(x)+ E_2(x)\\
&=&\delta_f(z)\left (\li(2x)-\li(x) \right )+O\left(\frac{x}{\log^b x}\right)+O\left( x^{1-\varepsilon} \right) \nonumber\\
&=&\delta_f(z)\left (\li(2x)-\li(x)\right )+O\left( \frac{x }{\log^b x} \right)  \nonumber ,
	\end{eqnarray} 
	where $\delta_f(z)\geq 0$ is defined in \eqref{eq1213.115}. \\ 
	
Since the density $\delta_f(z)> 0$ for a pair $z \ne \pm 1, w^2$, and $f(z)\ne g(z)^2$, see \cite[Theorem 13.1]{MK1983}, and the difference of logarithm integrals
\begin{equation}
 \li(2x)-\li(x)\gg \frac{x}{\log x}>0   
\end{equation}
for large $x\gg1$, the expression
\begin{eqnarray} \label{eq3292.970}
\sum_{x \leq p\leq 2x} \Psi (z)\Psi (f(z))
&=&\delta_f(z)\left (\li(2x)-\li(x)\right )+O\left( \frac{x }{\log^b x} \right)  \\
	&\gg& \frac{x}{\log x}\nonumber\\
    &>&0,\nonumber
	\end{eqnarray} 
is false for all sufficiently large numbers $x\gg1$, and contradicts the hypothesis \eqref{eq1213.210}. Therefore, the short interval $[x,2x]$ contains primes $p\geq x$ such that $z \ne \pm 1, w^2$, and $f(z)\ne g(z)^2$ are simultaneous primitive roots modulo $p$. 
\end{proof}

\section{Proof Of Theorem 1.2} \label{S8240.000}
As the main requirement for a primitive root value over the integers, $z\ne\pm 1, w^2$, the main requirement for a primitive root valued polynomial is $f(t)\ne g(t)^2$, see Example \ref{exa4545.300} for numerical evidence.\\

For a fixed prime $p\geq 2$, and a fixed polynomial $f(t)\ne g(t)^2$, the counting function for the pair of simultaneous primitive roots $z>1$ and $f(z)>1$ is defined by 
\begin{equation} \label{eq8240.110}
\mathcal{S}_p=\{ z<p :\ord_p z=p-1\; \text{ and } \ord_p(f(z))=p-1 \},
\end{equation}
and the average density over the set of primes $\tP=\{2,3,5,7, \ldots \}$ is defined by the limit
\begin{equation} \label{eq8240.120}
c(f,p)=\lim_{p \to \infty} \frac{\mathcal{S}_p}{p}.
\end{equation}

\begin{proof} {\bf (Theorem \ref{thm1210.300})}. Let $p\geq 2$ be a prime, and let $z<p$. The characteristic function of a pair of simultaneous primitive roots $z$ and $f(z)$ in $\F_p$ has the exact formula
\begin{equation} \label{eq8240.200}
 \Psi (z)\Psi (f(z)),
\end{equation}
where $\Psi (z) $ is characteristic function of primitive root in $\F_p$. Now suppose that a large fixed prime $p\geq 2$ does not support any pair of simultaneous primitive roots $z$ and $f(z)$ modulo $p$. Summing of the exact formula \eqref{eq8240.200} over the elements $z<p$ returns the nonexistence equation
\begin{equation} \label{eq8240.210}
	0=\sum_{z< p} \Psi (z)\Psi (f(z)).
\end{equation}
Replacing the characteristic function, Lemma \ref{lem3242.300}, and expanding the nonexistence equation \eqref{eq8240.210} yield
\begin{eqnarray} \label{eq8240.220}
0&=&\sum_{z< p}\Psi (z)\Psi (f(z)) \\ 
	&=&\sum_{z< p} \left (\frac{1}{p}\sum_{\gcd(k,p-1)=1,} \sum_{ 0\leq a\leq p-1} e^{i2 \pi \left (\tau ^k-z\right)a/p} \right ) \nonumber\\
&&\hskip 1 in \times \left (\frac{1}{p}\sum_{\gcd(n,p-1)=1,} \sum_{ 0\leq b\leq p-1} e^{i2 \pi \left (\tau ^n-f(z)\right)b/p} \right )\nonumber\\
&=&M(f,p) + E_0(f,p)+ E_1(f,p)+ E_2(f,p)\nonumber.
\end{eqnarray} 

\begin{enumerate}
\item The main term $M(f,p)$ is determined by a finite sum over the trivial additive characters pair $\psi_a(t) =e^{i 2\pi  at/p}=1 $ at $a=0$, and $\psi_b(t) =e^{i 2\pi  bt/p}=1 $ at $b=0$. This is computed in Lemma \ref{lem8215.200}.\\

\item The error term $E_0(f,p)$ is determined by a finite sum over the nontrivial additive characters pair $\psi_a(t) =e^{i 2\pi  at/p}=1 $ at $a=0$, and $\psi_b(t) =e^{i 2\pi  bt/p}\ne1 $ at $b \ne 0$. This is estimated in Lemma \ref{lem8799.200}.\\

\item The error term $E_1(f,p)$ is determined by a finite sum over the nontrivial additive characters pair $\psi_a(t) =e^{i 2\pi  at/p}\ne 1 $ at $a\ne 0$, and $\psi_b(t) =e^{i 2\pi  bt/p}=1 $ at $b = 0$. This is computed in Lemma \ref{lem8799.300}.\\

\item The error term $E_2(f,p)$ is determined by a finite sum over the nontrivial additive characters pair $\psi_a(t) =e^{i 2\pi  at/p}\ne 1$ at $a\ne 0$, and $\psi_b(t) =e^{i 2\pi bt/p}\ne 1 $ at $b \ne 0$. This is computed in Lemma \ref{lem8799.400}.\\
\end{enumerate}

Applying Lemma \ref{lem8215.200} to the main term, and Lemmas \ref{lem8799.200} to \ref{lem8799.400} to the error terms yield
\begin{eqnarray} \label{eq8240.960}
\sum_{z< p} \Psi (z)\Psi (f(z)) 
&=&M(f,p) + E_0(f,p)+ E_1(f,p)+ E_2(f,p)\\
&=&c(f,p)\left (\frac{\varphi(p-1)}{p-1}  \right )^2p +O\left(p^{1-2\varepsilon}\right) \nonumber ,
\end{eqnarray} 
where $c(f,z)\geq 0$ is defined in \eqref{eq8240.120}. \\ 
	
Since there are pairs $z \ne \pm 1, w^2$, and $f(z)\ne g(z)^2$ of simultaneous primitive roots, see \cite[Theorem 13.1]{MK1983}, the density $c(f,z)> 0$. Moreover, the term 
\begin{equation}
 \left (\frac{\varphi(p-1)}{p-1}  \right )^2\gg \frac{1}{(\log \log p)^2}>0   
\end{equation}
for large $p\gg1$. Consequently, the expression
\begin{eqnarray} \label{eq8240.970}
\sum_{z< p} \Psi (z)\Psi (f(z))
&=&c(f,p)\left (\frac{\varphi(p-1)}{p-1}  \right )^2p +O\left(p^{1-2\varepsilon}\right), \\
	&\gg& \frac{p}{(\log \log p)^2}\nonumber\\
    &>&0,\nonumber
	\end{eqnarray} 
is false for all sufficiently large primes $p\gg1$, and contradicts the hypothesis \eqref{eq8240.210}. Therefore, the finite field $\F_p$ contains a small subset $\mathcal{S}_p$ of elements $z\in \F_p$ such that $z>1$, and $f(z)>1$  are simultaneous primitive roots for all sufficiently large primes $p\geq2$. 
\end{proof}

\section{Some Numerical Examples}\label{exa4545}
A few small cases were computed to determine the accuracy of Theorem \ref{thm1210.300}, and to establish some notation. This simple experiment shows that irreducible polynomials seem to generate simpler spectra (distribution patterns) than the reducible polynomials, compare Figure \ref{f4545.200} and Figure \ref{f4545.400}.

\newpage
\begin{exa}\label{exa4545.100}{\normalfont The preliminary statistics for the finite field $\F_{97}$ and the polynomial $f(t)=t^2+1$ are listed here.

\begin{enumerate}
\item $\displaystyle \varphi(p-1)=32$, the total number of primitive roots, listed in Table \ref{t4545.100}. \\
\item $\displaystyle \left (\frac{\varphi(p-1)}{p-1}\right )^2p= \frac{24832}{2401}\approx10.34$, 
an estimate of the asymptotic (main term) for total number of pairs of simultaneous primitive roots $z, f(z)$, the actual number is $4$, see Table \ref{t4545.100}. \\
\end{enumerate}
Table \ref{t4545.100} lists all the primitive roots $z$ modulo $p=97$ and the corresponding simultaneous pairs, if a pair $z$ and $f(z)$ exists, and Figure \ref{f4545.100} displays the spectrum of simultaneous pairs of primitive roots.

}
\end{exa}

\begin{table}[h!]
\centering
\caption{Data for the prime finite field $\F_{97}$} \label{t4545.100}
\begin{tabular}{c|c|c| c|c|c|c| c}
$z$&$f(z)=z^2+1$&$z$&$f(z)=z^2+1$&$z$&$f(z)=z^2+1$&$z$&$f(z)=z^2+1$\\
\hline
5&  26&     29   &0  &60&  0&   90   &0\\
7&    0&    37   & 0 &68  & 0&  92     &26\\
10&   0&    38   &87 &71  &0&& \\
13&   0&    39   &0  &74   &0&& \\
14&   0&    40   &0  &76&  0&     &\\
15&  0&     41   &0  &80& 0 &      &\\
17&  0&     56   & 0 &82& 0 &      &\\
21&  0&     57   &0  &83& 0 &      &\\
23&  0&     58   &0  &84& 0 &      &\\
26&  0&     59   &87  &87& 0 &     &\\

\end{tabular}
\end{table}

\begin{figure}[h]
\caption{The spectrum of the pair $(z,f(z))$ of simultaneous primitive roots in $\F_{97}$} \label{f4545.100}\centering%
  \begin{tikzpicture}
	\begin{axis}[
		xlabel=$z$,
		ylabel=$f(z)$,
width=0.95\textwidth,
       height=0.5\textwidth		]
	\addplot[only marks] coordinates {
		(5,26)
		(7,0)
		(10,0)
		(14,0)
		(15,0)
		(17,0)
		(21,0)
		(23,0)
		(26,0)
		(29,0)
		(37,0)
		(38,87)
		(39,0)(40,0)(41,0)(56,0)(58,0)
		(59,87)
		(60,0)(68,0)(71,0)(74,0)(76,0)(80,0)(82,0)(83,0)(84,0)(87,0)(90,0)
		(92,26)
		};
	\end{axis}
\end{tikzpicture}	
\end{figure}
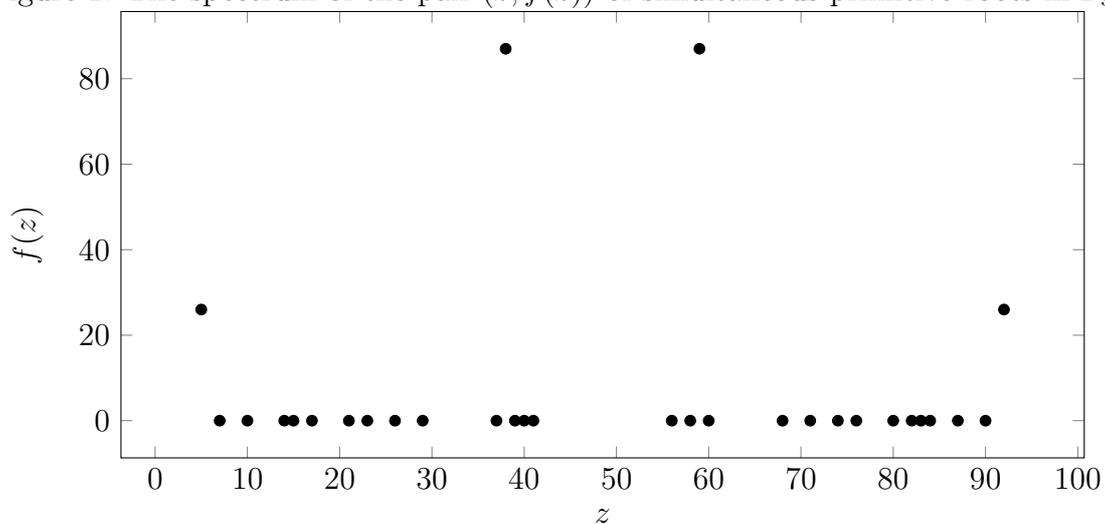

\begin{exa}\label{exa4545.200}{\normalfont The preliminary statistics for the finite field $\F_{101}$ and the polynomial $f(t)=t^2+1$ are listed here.

\begin{enumerate}
\item $\displaystyle \varphi(p-1)=40$, the total number of primitive roots, listed in Table \ref{t4545.200}. \\
\item $\displaystyle \left (\frac{\varphi(p-1)}{p-1}\right )^2p= \frac{404}{25}\approx16.16$, 
an estimate of the asymptotic (main term) for the total number of pairs of simultaneous primitive roots $z, f(z)$, the actual number is $12$, see Table \ref{t4545.200}. \\
\end{enumerate}
Table \ref{t4545.200} lists all the primitive roots $z$ modulo $p=101$ and the corresponding simultaneous pairs, if a pair $z$ and $f(z)$ exists, and Figure \ref{f4545.200} displays the spectrum of simultaneous pairs of primitive roots.

}
\end{exa}

\begin{table}[h!]
\centering
\caption{Data for the prime finite field $\F_{101}$} \label{t4545.200}
\begin{tabular}{c|c|c| c|c|c|c| c}
$z$&$f(z)=z^2+1$&$z$&$f(z)=z^2+1$&$z$&$f(z)=z^2+1$&$z$&$f(z)=z^2+1$\\
\hline
2&  0&     28   &0  &51&  0&   74   &0\\
3&    0&   29   &34 &53  & 83&  75     &0\\
7&  50&    34   &46 &55  &0&    83&0 \\
8&   0&    35   &0  &59  &48&   86&0 \\
11&   0&   38   &0  &61&  86 &  89&0\\
12&  0&    40   &86  &63& 0 &   90   &0\\
15&  0&    42   & 48 &66& 0 &   93   &0\\
18&  0&    46   &0  &67& 46 &   94   &50\\
26&  0&    48   &83  &72& 34 &   98   &0\\
27&  0&    50   &0  &73& 0 &   99  &0\\

\end{tabular}
\end{table}

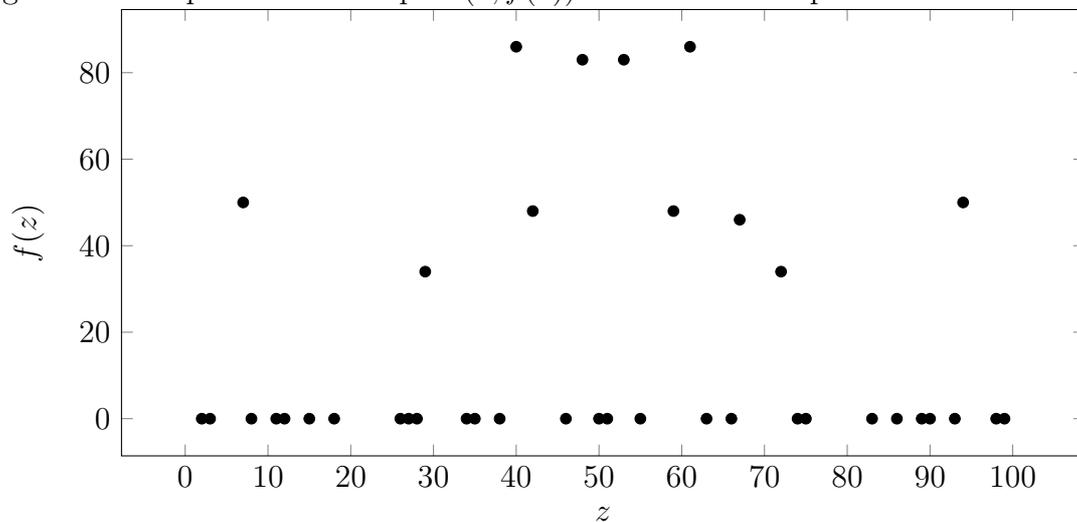
\begin{figure}[h]
\caption{The spectrum of the pair $(z,f(z))$ of simultaneous primitive roots in $\F_{101}$} \label{f4545.200}\centering%
  \begin{tikzpicture}
	\begin{axis}[
		xlabel=$z$,
		ylabel=$f(z)$,
width=0.95\textwidth,
       height=0.5\textwidth		]
	\addplot[only marks] coordinates {
		(2,0)(3,0)(7,50)(8,0)(11,0)(12,0)(15,0)(18,0)
		(26,0)(27,0)(28,0)(29,34)
		(34,0)(35,0)(38,0)(40,86)(42,48)(46,0)(48,83)
		(50,0)(51,0)(53,83)(55,0)(59,48)(61,86)
		(63,0)(66,0)(67,46)(72,34)(74,0)(75,0)(83,0)
		(86,0)
		(89,0)
		(90,0)(93,0)
		(94,50)(98,0)(99,0)
		};
	\end{axis}
\end{tikzpicture}	
\end{figure}

\begin{exa}\label{exa4545.300}{\normalfont The preliminary statistics for the finite field $\F_{127}$ and the polynomial $f(t)=(t+2)(t+1)^2$ are listed here.

\begin{enumerate}
\item $\displaystyle \varphi(p-1)=36$, the total number of primitive roots, listed in Table \ref{t4545.300}. \\
\item $\displaystyle \left (\frac{\varphi(p-1)}{p-1}\right )^2p= \frac{508}{49}\approx10.37$, 
an estimate of the asymptotic (main term) for the total number of pairs of simultaneous primitive roots $z, f(z)$, the actual number is $9$, see Table \ref{t4545.300}. \\
\end{enumerate}
Table \ref{t4545.300} lists all the primitive roots $z$ modulo $p=127$ and the corresponding simultaneous pairs, if a pair $z$ and $f(z)=(z+2)(z+1)^2$ exists, and Figure \ref{f4545.300} displays the spectrum of simultaneous pairs of primitive roots.

}
\end{exa}

\begin{table}[h!]
\centering
\caption{Data for the prime finite field $\F_{127}$} \label{t4545.300}
\begin{tabular}{c|c|c| c|c|c|c| c}
$z$&$f(z)=(z+2)(z+1)^2$&$z$&$f(z)$&$z$&$f(z)$&$z$&$f(z)$\\
\hline
3&  0&     46   &114    &83&  0&   109   &0\\
6&    0&   48   &0    &85  & 0&  110     &97\\
7&  0&    53   &106     &86  &0&    112&0 \\
12&   0&    55   &0   &91  &6&   114&67 \\
14&   0&   56   &101   &92&  0 &  116&116\\
23&  0&    57   &0   &93& 0 &   118   &0\\
29&  0&    58   &0    &96& 0 &     &\\
39&  0&    65   &6  &97& 0 &     &\\
43&  0&    67   &0  &101& 0 &      &\\
45&  0&    78   &43  &106& 0 &     &\\

\end{tabular}
\end{table}

\begin{figure}[h]
\caption{The spectrum of the pair $(z,f(z))$ of simultaneous primitive roots in $\F_{127}$} \label{f4545.300}\centering%
  \begin{tikzpicture}
	\begin{axis}[
		xlabel=$z$,
		ylabel=$f(z)$,
width=0.95\textwidth,
       height=0.5\textwidth		]
	\addplot[only marks] coordinates {
		(3,0)(6,0)(7,0)(12,0)(14,0)(23,0)(29,0)(39,0)
		(43,0)(45,0)(46,114)(48,0)
		(53,106)(55,0)(56,101)(57,0)(58,0)(65,6)(67,0)(78,43)
		(83,0)(85,0)(86,0)(91,6)(92,0)(93,0)
		(96,0)(97,0)(101,0)(106,0)(109,0)(110,97)(112,0)
		(114,67)(116,116)(118,0)
		};
	\end{axis}
\end{tikzpicture}	
\end{figure}
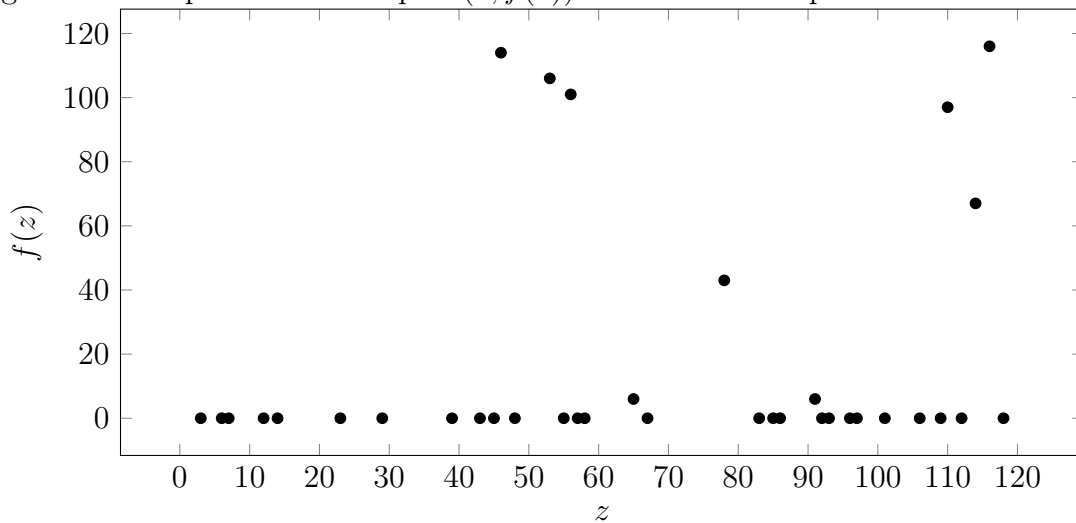

\newpage
\begin{exa}\label{exa4545.400}{\normalfont The preliminary statistics for the finite field $\F_{89}$ and the polynomial $f(t)=(t+2)(t+1)^2$ are listed here.

\begin{enumerate}
\item $\displaystyle \varphi(p-1)=40$, the total number of primitive roots, listed in Table \ref{t4545.400}. \\
\item $\displaystyle \left (\frac{\varphi(p-1)}{p-1}\right )^2p= \frac{2225}{121}\approx18.39$, 
an estimate of the asymptotic (main term) for the total number of pairs of simultaneous primitive roots $z, f(z)$, the actual number is $18$, see Table \ref{t4545.400}. \\
\end{enumerate}
Table \ref{t4545.400} lists all the primitive roots $z$ modulo $p=89$ and the corresponding simultaneous pairs, if a pair $z$ and $f(z)=(z+2)(z+1)^2$ exists, and Figure \ref{f4545.400} displays the spectrum of simultaneous pairs of primitive roots.

}
\end{exa}

\begin{table}[h!]
\centering
\caption{Data for the prime finite field $\F_{89}$} \label{t4545.400}
\begin{tabular}{c|c|c| c|c|c|c| c}
$z$&$f(z)=(z+2)(z+1)^2$&$z$&$f(z)$&$z$&$f(z)$&$z$&$f(z)$\\
\hline
3&  0&     27   &41    &46& 33&   63   &41\\
6&    0&   28   &43    &48  & 0&  65     &0\\
7&  0&    29   &43     &51  &0&   66&    0 \\
13&   3&   30   &0   &54  &33&   70&     0 \\
14&   0&   31   &61  &56&  29 &  74&     33\\
15&  0&    33   &54   &58& 66 &   75   &19\\
19&  0&    35   &70    &59& 0 &   76  &0\\
23&  0&    38   &0  &60& 14 &     82&   0\\
24&  0&    41   &24  &61& 3 &      83&0\\
26&  31&    43   &0  &62& 0 &     86&0\\

\end{tabular}
\end{table}

\begin{figure}[h]
\caption{The spectrum of the pair $(z,f(z))$ of simultaneous primitive roots in $\F_{89}$} \label{f4545.400}\centering%
  \begin{tikzpicture}
	\begin{axis}[
		xlabel=$z$,
		ylabel=$f(z)$,
width=0.95\textwidth,
       height=0.5\textwidth		]
	\addplot[only marks] coordinates {
		(3,0)(6,0)(7,0)(13,3)(14,0)(15,0)(19,0)
(23,0)(24,0)(26,31)(27,41)(28,43)(29,43)
(30,0)(31,61)(33,54)(35,70)(38,0)
(41,24)(43,0)(46,33)(48,0)
(51,0)(54,33)(56,29)(58,66)(59,0)
(60,14)(61,3)(62,0)
(63,41)(65,0)(66,0)(70,0)(74,33)(75,19)
(76,0)(82,0)(83,0)(86,0)
		};
	\end{axis}
\end{tikzpicture}	
\end{figure}
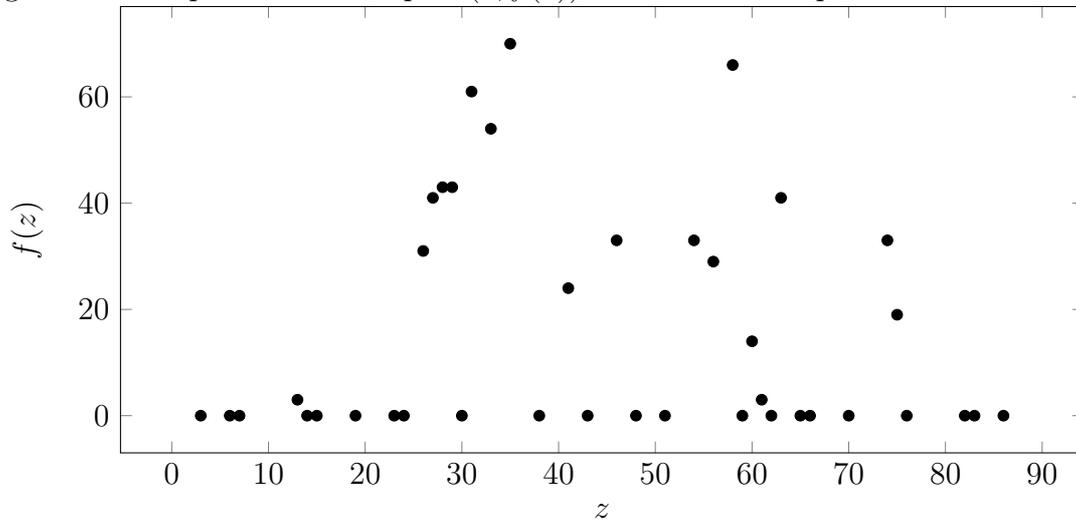

\section{Problems}\label{EXE3535}

\subsection{Primitive Root Algorithm Problems}\label{EXE3520}
\begin{exe}\label{exe3520.010} {\normalfont Develop a divisor-free primitive root test. The standard primitive root test is totally dependent on the divisors of $p-1$.
}
\end{exe}

\subsection{Prime Primitive Roots Problems}\label{EXE3550}

\begin{exe}\label{exe3550.010} {\normalfont Let $p\geq 2$ be a large prime. Estimate the number $U(p)\geq1$ of prime primitive roots in the subset $$\mathcal{U}(p)=\{q<p:q \text{ is prime and }\ord_pq=p-1\}.$$
Prove or disprove that $U(p) \asymp \pi(p)$, where $\pi(x)=\#\{p\leq x:p \text{ is prime}\}$ is the usual prime counting function.
}
\end{exe}

\subsection{Density and Distribution Problems}\label{EXE3530}
\begin{exe}\label{exe3530.020} {\normalfont Let $f(t)=x^2+1$. Compute the exact and numerical approximation for the density constant $c_f>0$ appearing in Theorem \ref{thm1210.200}, see \cite{MK1983}, \cite{PF1997}, etc., for some related theoretical background.
}
\end{exe}

\begin{exe}\label{exe3530.030} {\normalfont Let $p$ be a large prime, and let $f(t)=x^2+1$. The distribution of simultaneous pairs of primitive roots $z$ and $f(z)$ seems to be symmetric around $(p-1)/2$, see Examples \ref{exa4545.100} and \ref{exa4545.200}. Determine or estimate the distribution of simultaneous pairs of primitive roots $z$ and $f(z)$ as $p\to \infty$,
}
\end{exe}

\subsection{Average Order Problems}\label{EXE3540}

\begin{exe}\label{exe3540.010} {\normalfont Let $p\geq 2$ be a large prime, let $x\leq p$, and let $\mathcal{T}\subset \F_p$ be the subset of primitive roots modulo $p$. If $d(n)=\sum_{d\mid n}1$ is the divisors function, compute or estimate the following restricted average orders.
\begin{enumerate}[label=\alph*)]
\item $\displaystyle \sum_{n\in \mathcal{T}}d(n),$ the complete finite sum over the subset of primitive roots.\\
\item $\displaystyle \sum_{n\leq x,\; n\in \mathcal{T}}d(n),$ the incomplete finite sum over the subset of primitive roots.\\
\end{enumerate}
The complete sum is expected to be simpler than the incomplete sum.
}
\end{exe}

\begin{exe}\label{exe3540.020} {\normalfont Let $p\geq 2$ be a large prime, let $x\leq p$, and let $\mathcal{T}\subset \F_p$ be the subset of primitive roots modulo $p$. If $\sigma(n)=\sum_{d\mid n}d$ is the sum of divisors function, compute or estimate the following restricted average orders.
\begin{enumerate}[label=\alph*)]
\item $\displaystyle \sum_{n\in \mathcal{T}}\sigma(n),$ the complete finite sum over the subset of primitive roots.\\
\item $\displaystyle \sum_{n\leq x,\; n\in \mathcal{T}}\sigma(n),$ the incomplete finite sum over the subset of primitive roots.\\
\end{enumerate}
The complete sum is expected to be simpler than the incomplete sum.
}
\end{exe}

\begin{exe}\label{exe3540.030} {\normalfont Let $p\geq 2$ be a large prime, let $x\leq p$, and let $\mathcal{T}\subset \F_p$ be the subset of primitive roots modulo $p$. If $\varphi(n)=n\sum_{d\mid n}\mu(d)/d$ is the totient function, compute or estimate the following restricted average orders.
\begin{enumerate}[label=\alph*)]
\item $\displaystyle \sum_{n\in \mathcal{T}}\varphi(n),$ the complete finite sum over the subset of primitive roots.\\
\item $\displaystyle \sum_{n\leq x,\; n\in \mathcal{T}}\varphi(n),$ the incomplete finite sum over the subset of primitive roots.\\
\end{enumerate}
The complete sum is expected to be simpler than the incomplete sum.
}
\end{exe}

\subsection{Value Set Problems}\label{EXE3550}
\begin{exe}\label{exe3540.010} {\normalfont Let $p\geq 2$ be a large prime, and let $\mathcal{T}\subset \F_p$ be the subset of primitive roots modulo $p$. Estimate the number of primitive roots $$V_{d}(p)=\#\mathcal{V}_{d}(p)$$ in the value set $$\mathcal{V}_{d}(p)=\{ \tau: d(\tau) \text{ is a primitive root}\}.$$
}
\end{exe}

\begin{exe}\label{exe3540.020} {\normalfont Let $p\geq 2$ be a large prime, and let $\mathcal{T}\subset \F_p$ be the subset of primitive roots modulo $p$. Estimate the number of primitive roots $$V_{\sigma}(p)=\#\mathcal{V}_{\sigma}(p)$$ in the value set $$\mathcal{V}_{\sigma}(p)=\{ \tau: \sigma(\tau) \text{ is a primitive root}\}.$$
}
\end{exe}

\begin{exe}\label{exe3540.030} {\normalfont Let $p\geq 2$ be a large prime, and let $\mathcal{T}\subset \F_p$ be the subset of primitive roots modulo $p$. Estimate the number of primitive roots $$V_{\varphi}(p)=\#\mathcal{V}_{\varphi}(p)$$ in the value set $$\mathcal{V}_{\varphi}(p)=\{ \tau: \varphi(\tau) \text{ is a primitive root}\}.$$
}
\end{exe}

\newpage

\currfilename.\\


\begin{thebibliography}{99}
\bibitem{BT2021} Andrew R. Booker, Stephen D. Cohen, Nicole Sutherland, Tim Trudgian. \textit{\color{blue}Primitive values of quadratic polynomials in a finite field}. http://arxiv.org/abs/1803.01435.




\bibitem{CN2021} Carella, N. \textit{\color{blue}Simultaneous Elements Of Prescribed Multiplicative Orders}. 
http://arxiv.org/abs/arxiv:2103.04822 
















\bibitem{MK1983} Matthews, K. R. \textit{\color{blue}A generalisation of Artin’s conjecture for primitive roots}. Acta Arith. 29 (1976), no. 2, 113-146.




\bibitem{PF1997} Pappalardi, Francesco. \textit{\color{blue}On the $r$-Rank Artin Conjecture.} Math. Comp.
Volume 66, Number 218, April 1997, Pages 853-868.


\bibitem{SP1969} Stephens, P. J. \textit{\color{blue}An average result for Artin conjecture.} Mathematika 16, (1969), 178-188.
http://doi.org/10.1112/S0025579300008159.


\bibitem{VR1973} Vaughan, R. C. \textit{\color{blue}Some applications of Montgomery's sieve}. J. Number Theory 5 (1973), 64-79.
MR0124305.

\end{thebibliography}
\end{document}